\documentclass[twoside,11pt]{amsart}

\usepackage{a4wide}
\usepackage{latexsym}
\usepackage{mathrsfs}
\usepackage[T1]{fontenc}
\usepackage{enumitem}
\usepackage{mathrsfs}

\usepackage{amssymb}
\usepackage{latexsym}
\usepackage{amsmath}
\usepackage{fancyhdr}
\usepackage{bbm}
\usepackage{setspace}
\usepackage{mathabx}

\numberwithin{equation}{section}

\newtheorem{theorem}{Theorem}[section] 
\newtheorem{lemma}[theorem]{Lemma}     
\newtheorem{corollary}[theorem]{Corollary}
\newtheorem{proposition}[theorem]{Proposition}
\theoremstyle{definition}

\newtheorem{question}[theorem]{Question}
\newtheorem{remark}[theorem]{Remark} 

\newcommand {\C}{\mathbb C}

\newcommand {\N}{\mathbb N}
\newcommand {\Z}{\mathbb Z}
\newcommand {\K}{\mathcal K}

\newcommand {\A}{\mathcal A}
\newcommand{\rad}{{\rm rad\,}}
\newcommand{\supp}{{\rm supp\,}}

\newcommand{\eps}{\varepsilon}

\newcommand{\F}{\mathbb F}

\newcommand{\pdt}{\underline}

\title[Radicals of Biduals]{Radicals of biduals of Beurling algebras can be different for the two Arens products}

\begin{document}

	\author[J.\ T.\  White]{Jared T.\ White}
	\address{
		Jared T. White, School of Mathematics and Statistics, The Open University, Walton Hall, Milton Keynes MK7 6AA, United Kingdom.}
	\email{jared.white@open.ac.uk}
	
	\keywords{Banach algebra, Beurling algebra, Arens product, Jacobson radical, free group, word length}
	
	\subjclass[2020]{43A20 (primary); 16N20, 20E05 (secondary)}
	
	\date{2026}
	
	\maketitle
	
	\begin{center}
		\textit{The Open University}
	\end{center}
	
	\begin{abstract}
		Let $\rad$ denote the Jacobson radical of a Banach algebra, and let $\Box$ and $\Diamond$ denote the two Arens products on its bidual. We give an example of a Beurling algebra $\A$ for which $\rad(\A^{**}, \Box) \neq \rad(\A^{**}, \Diamond)$, answering a question of Dales and Lau. The underlying group in our example is the free group on three generators.
	\end{abstract}
	
	\section{Introduction}
	
	A fundamental construction in the theory of Banach algebras is the Arens product, which is a multiplication defined on the bidual of a Banach algebra extending its original multiplication. Actually there are two ways to do this, so that the bidual of a Banach algebra has two (potentially different) multiplications defined on it, both known as Arens products, and denoted here by $\Box$ and $\Diamond$. 
	
	The Arens products have many important applications. Firstly, they provide a way to turn arguments about bounded left/right approximate identities in a Banach algebra $\A$ into arguments about actual left/right identities in $\A^{**}$. They have also found applications in studying amenability and related properties of groups. For example, the Arens product arguments of \cite{F90} are ultimately an important ingredient in Caprace and Monod's solution to Reiter's Problem about amenability of closed subgroups \cite{CM}. More recently, Arens products have been used directly to construct conjugation-invariant means on groups \cite{DTW}. Another application is found in combinatorics: the analogue of the Arens product for semigroup compactifications plays an important role in Ramsey Theory \cite{HS}; the relationship between the semigroup and Banach algebra Arens product is explored in, for example, \cite{DLS, DS22, DSZZ}.
	
	Arens products also provide a useful tool for distinguishing operator algebras from other types of Banach algebras. Indeed, for an operator algebra, the two Arens products always coincide, so Banach algebras whose Arens products do not coincide cannot be isomorphic to an operator algebra. This method is used for example in \cite{LS} to show that certain Beurling algebras cannot be isomorphic to operator algebras; similarly, Beurling-Fourier algebras are considered in \cite{GLSS}.

	If the two Arens products agree then the Banach algebra is called 
	\textit{Arens regular}. As mentioned, operator algebras are always Arens regular. On the other hand $\ell^1(G)$, for a group $G$, lies at the opposite extreme: the two products (which, remember, extend the product on $\ell^1(G)$) only agree on $\ell^1(G)$ itself. Dales and Lau wondered whether intermediate cases were possible, and this thought precipitated their memoir \cite{DL} in which they explored the wide range of different behaviour that the Arens products can exhibit, drawing predominantly from the class of weighted $\ell^1$-algebras on groups for their examples, a class that we call Beurling algebras.
	
	One fundamental structural invariant of a Banach algebra is its Jacobson radical. This is defined to be the largest left (or equivalently right) ideal consisting entirely of quasi-nilpotent elements. The radical is a two-sided ideal, and is important to understanding the Banach algebra as it can be characterised as the set of those elements which are not easily understood by representation theory (precisely it is equal to the set of those elements that annihilate every simple left module). 
	In light of the importance of Arens regularity, or its failure, in applications of bidual techniques, it is natural to ask how different the Jacobson radicals corresponding to each Arens product can be.
	In their memoir Dales and Lau pose the following question \cite[Question 3, Chapter 14]{DL}.
	\begin{question} 		\label{Q1}
		Does there exist a locally compact group $G$ and a weight $\omega$ on $G$ such that the radicals of $(L^1(G, \omega)^{**}, \Box)$ and $(L^1(G, \omega)^{**}, \Diamond)$ are distinct sets? 
	\end{question}
	\noindent
	The purpose of this article is to construct an example that resolves this question in the affirmative. 
	
	A simple argument (see Lemma \ref{2.1} below) shows that, if $\A$ is a commutative Banach algebra, then $\rad(\A^{**}, \Box) = \rad(A^{**}, \Diamond)$. As such, we are forced to choose a non-abelian group for our example.	
	
	On the other hand, examples of Banach algebras $\A$ for which $\rad(\A^{**}, \Box) \neq \rad(\A^{**}, \Diamond)$ were known to Dales and Lau. For instance \cite[Example 6.2]{DL} shows that the algebra of compact operators $\K(c_0)$ has $\rad(\K(c_0)^{**}, \Box) = \{0 \}$ but $\rad(\K(c_0)^{**}, \Diamond) \neq \{ 0 \}$. Moreover, \cite[Example 6.3]{DL} gives an example of a Banach *-algebra for which the two radicals of the second dual are distinct. Ours in the first such example of a Banach algebra defined in terms of a group.	
	
	For our example, we take our group to be $G = \F_3$, the free group on three generators, to be our group. Our weight will be of the following type. Given a generating set $T$ for a discrete monoid $\mathcal{M}$ the map $\omega(x) = \exp(|x|_T) \ ( x \in \mathcal{M})$, where $|\cdot |_T$ denotes word length with respect to $T$, always defines a weight on $\mathcal{M}$, due to subadditivity of $| \cdot |_T$. Our method is to carefully define a certain infinite set $X$ which generates $\F_3$ as a monoid, such that the corresponding weight defines a Beurling algebra with the desired property. 
		
	The idea of defining a weight using an infinite generating set for a finitely-generated group in order to get interesting properties goes back at least to \cite[Example 9.17]{DL}, which the authors of that work attribute to Joel Feinstein. The present author later developed this technique to construct further examples \cite{W0, W2}. These examples all take place on $\Z$. One innovation of the present work over what has gone before is to apply this technique to a non-abelian group. As a consequence the word length calculations are much more involved. 
	
	We note that our weight $\omega$ is not symmetric, and as such $\ell^1(\F_3, \omega)$ is not a Banach *-algebra. It remains open to answer Question \ref{Q1} for a symmetric weight on the group.
	
	The paper is structured as follows. In Section 2 we introduce the necessary background from group theory and Banach algebras. Section 3 gives an outline of the proof of our main theorem. Sections 4 and 5 are concerned with proving that certain elements of $\F_3$ have particular word lengths with respect to our non-standard generating set $X$. The proof of our main theorem is given in Section 6. The reader who is mainly interested in Banach algebras, and is willing to take the word length evaluations in \eqref{eqstar} and \eqref{eqdagger} below on faith, could safely skip Sections 4 and 5. The Banach algebra aspects of the proof are entirely contained in Section 6.

	\section{Background and Notation}
	\subsection{Groups and Basic Notation}	
	We shall write $\F_3 = \langle a, b, c \rangle $ for the free group on three generators. 	
	We shall refer to $a,b,c,a^{-1},b^{-1}, c^{-1}$ as letters. By a \textit{word} we mean a finite string of letters, and a word is \textit{reduced} if within the word $a$ and $a^{-1}$ never appear next to each other, and neither do $b$ and $b^{-1}$, or $c$ and $c^{-1}$. We denote the empty word by $1$, and we consider it to be reduced. 
	By successively deleting instances of $aa^{-1}, a^{-1}a, bb^{-1},$ etc., every word may be `reduced' to give a unique reduced word (although the sequence of deletions is not unique).
	The free group $\F_3$ consists of reduced words, and the group operation is given by concatenation followed by reducing the resulting word. The identity element is the empty word $1$.
		
	If $u_1, u_2, \ldots u_n \in \F_3$ then we sometimes write `$u_1u_2 \ldots u_3$ (reduced)' to indicate that no cancellations occur between letters belonging to adjacent terms $u_i$ in the product.		
	Given two reduced words $u,v \in \F_3$, we shall write $u|v$ to mean that $u$ is a subword of $v$. 
	
	Let G be a group and $T \subseteq G$. We say that $T$ is \textit{symmetric} if $t \in T$ implies that $t^{-1} \in T$. We say that \textit{$T$ generates $G$} if every $g \in G \setminus \{ 1 \}$ may be written as a product of elements from $T$. Note that we do not assume that $T$ is symmetric, so that in this article `$T$ generates $G$' always means `$T$ generates $G$ as a monoid' (rather than as a group).  If $T$ is a generating set for $G$ we define the \textit{$T$-word length} of $g \in G$ to be 
	$$|g|_T = \min \{ n \in \N : g = t_1 t_2 \cdots t_n, \text{ for some } t_1, \ldots, t_n \in T \}.$$
	If $t_1,t_2 \ldots, t_n \in T$ have the property that $g = t_1t_2\cdots t_n$ cannot be expressed using fewer than $n$ elements of $T$ (i.e. if $n = |g|_T$) then we say that the expression $t_1t_2 \dots t_n$ is \textit{$T$-minimal}. 
	
	We write $S = \{ a, b, c, a^{-1}, b^{-1} , c^{-1}\}$ for the standard generating set of $\F_3$. For brevity we shall simply write $|u|$ for $|u|_S$, where $u \in \F_3$, and similarly we refer to $|u|$ simply as the word length of $u \in \F_3$, rather than the $S$-word length.	
	
	Since we shall deal with many lengthy products of elements of $\F_3$ we find it convenient to introduce the following notation: given $d_1,\ldots, d_k \in S$ we shall write $\pdt{d} = d_1d_2\cdots d_k$. Sometimes we may have many products which themselves require indexing. In this case we shall write them as $\pdt{d}_1, \ldots, \pdt{d}_n$, where $\pdt{d}_i = d_{i, 1}d_{i,2}\cdots d_{i,p_i}$, where $p_1, \ldots, p_n \in \N$, and where $d_{ij} \in S$ for all $i,j$. Note that the product is then taken over the second index. Usually we shall suppress the notation $p_i$ representing the number of elements in each product. We interpret an empty product of group elements to be $1$. 
	
	Sometimes, given a sequence $d_1, \ldots, d_k \in S$, and $1<l<k$, we may want to split the sequence in two at $l$, in order to obtain two products, and we may write e.g. $d_i' = d_i \ (i =1, \ldots, l-1)$ and $d_i'' = d_{i-1+l} \ (i=1, \ldots, k-l+1)$, so that $\pdt{d} = \pdt{d}' \pdt{d}''$ (reduced). For the ease of the reader, we shall usually define the elements of the sequences $d'$ and $d''$ implicitly via their product, rather than giving formulae for $d_i'$ and $d_i''$ as above, writing e.g. `let $\pdt{d}'' = d_l d_{l+1}\cdots d_k$'.
	
	We fix the following notation for specific homomorphisms from the free group $\F_3$ to the integers.	Let $\theta \colon \F_3 \to \Z$ be the homomorphism defined by $\theta(a) = \theta(b) = 1$, and $\theta(c) = 0$. Let $\varphi_a \colon \F_3 \to \Z$ be the homomorphism defined by $\varphi_a(a) = 1, \ \varphi_a(b) = \varphi_a(c) = 0$, and define $\varphi_b$ and $\varphi_c$ analogously. In fact $\theta = \varphi_a + \varphi_b$.

	\subsection{Banach Algebras}
	Let $\A$ be a Banach algebra. The two Arens products on $\A^{**}$, denoted by $\Box$ and $\Diamond$, are defined in three stages: first we define the action of $\A$ on $\A^*$; next we define $\Phi \cdot \lambda$ and $\lambda \cdot \Psi$ for $\lambda \in \A^*$ and $\Phi, \Psi \in \A^{**}$; finally we define $\Phi \Box \Psi$ and $\Phi \Diamond \Psi$. The formulae are as follows:
	\begin{align*}
		\langle \lambda \cdot a, b \rangle  &= \langle \lambda, ab \rangle, &
		\langle a \cdot \lambda, b \rangle &= \langle \lambda, ba \rangle, \\
		\langle \Phi \cdot \lambda, a \rangle &= \langle \Phi, \lambda \cdot a \rangle, &
		\langle \lambda \cdot \Psi, a \rangle &= \langle \Psi, a \cdot \lambda \rangle, \\
		\langle \Psi \Box \Phi, \lambda \rangle &= \langle \Psi, \Phi \cdot \lambda \rangle, &
		\langle \Psi \Diamond \Phi, \lambda \rangle &= \langle \Phi, \lambda \cdot \Psi \rangle, 
	\end{align*}
	for $\Phi, \Psi \in \A^{**}, \lambda \in \A^*, a, b \in \A$. Both of these products extend the multiplication on $\A$. The $\Box$ product is weak*-continuous on the right, whereas the $\Diamond$ product in weak*-continuous on the left. This fact gives us formulae for the Arens products as repeated weak*-limits: let $\Phi, \Psi \in \A^{**}$, and let $(a_\alpha)$ and $(b_\beta)$ be nets in $\A$ that weak*-converge to $\Phi$ and $\Psi$ respectively. Then 
	\begin{equation}		\label{eqArens}
	\Phi \Box \Psi = \lim_\alpha \lim_\beta a_\alpha b_\beta, 
	\qquad
	\Phi \Diamond \Psi = \lim_\beta \lim_\alpha a_\alpha b_\beta,
	\end{equation}
	where both limits are taken in the weak*-topology.
	
	The Arens products were introduced by Arens in \cite{Ar1, Ar2}. For further information about Arens products, \cite[Chapter 1.4]{Palmer} contains a very readable account of the basic properties, and \cite{DL} gives a detailed introduction and background.
	
	Let $G$ be a group with identity $1$. A function $\omega \colon G \to (0, \infty)$ is called a \textit{weight on $G$} if $\omega(1) = 1$ and $\omega(st) \leq \omega(s)\omega(t) \ (s,t \in G)$. We define 
	$$\ell^1(G, \omega) = \{ f \colon G \to \C : \sum_{t \in G} |f(t)| \omega(t) <\infty \}.$$
	This is a Banach space with the norm $\|f\|_\omega = \sum_{t \in G} |f(t)| \omega(t)$. Given $f,g \in \ell^1(G, \omega)$ we define the convolution of $f$ and $g$ to be 
	$$(f*g)(t) = \sum_{s \in G} f(s)g(s^{-1}t) \quad (t \in G).$$
	It can be checked that $f*g$ again belongs to $\ell^1(G, \omega)$ and that $\|f*g\|_\omega \leq \|f\|_\omega \|g\|_\omega$, so that $\ell^1(G, \omega)$ with convolution is a Banach algebra. We call a Banach algebra of this form a \textit{Beurling algebra}. Its dual space may be identified with 
	$$\ell^1(G,\omega)^* = \ell^\infty(G, 1/\omega) = \{ \lambda \colon G \to \C : \lambda/\omega \in \ell^\infty(G) \}.$$
	
	Given a group $G$ and a generating set $T$ for $G$ we can define a weight $\omega$ on $G$ by the formula 
	$$\omega(g) = \exp(|g|_T) \quad (g \in G).$$
	The fact that this is submultiplicative follows from the 
	subadditivity of $|\cdot |_T$. It is a weight of this form on $\F_3$ that we shall use for our main example.
	
	Let $\A$ be a unital algebra and denote the spectrum of an element $a \in \A$ by $\sigma(a)$. We say that $a \in \A$ is \textit{quasi-nilpotent} if $\sigma(a) = \{ 0 \}$; by the spectral radius formula, this is equivalent to asking that $\lim_{n \to 0} \| a^n \|^{1/n} = 0$. Nilpotent elements are quasi-nilpotent. The \textit{Jacobson Radical of $\A$}, hereafter denoted by $\rad \A$, is defined to be
	the largest left  ideal consisting of quasi-nilpotent elements, that is
	$$\rad \A = \{ a \in \A : \sigma(ba) = \{0\} \ (b \in \A) \}.$$
	The Jacobson radical is always a closed two-sided ideal of $\A$, and is also the largest right ideal consisting of quasi-nilpotent elements:
	$$\rad \A = \{ a \in \A : \sigma(ab) = \{0\} \ (b \in \A) \}.$$
	In particular any $a \in \A$ for which $\A a$ is nilpotent belongs to $\rad \A$, a fact that we shall make use of in Section 6 to check that the element we construct really belongs to the radical.

	\subsection{Nets and Subnets}
	A \textit{directed set} is a set $\Lambda$  with a relation $\leq$ on $\Lambda$ which is reflexive and transitive, and for which every two elements have a common upper bound. 
	Let $X$ be a topological space. Then a \textit{net} in $X$ is a function on a directed set taking values in $X$; we typically denote a net on the directed set $\Lambda$ by $(x_{\alpha})_{\alpha \in \Lambda}$, or simply as $(x_\alpha)$. 
	A net $(x_\alpha)_{\alpha \in \Lambda}$ converges to a point $x \in X$ if, given an open neighbourhood $U$ of $x$, there exists $\alpha_0 \in \Lambda$ such that $x_\alpha \in U$ for all $\alpha \geq \alpha_0$.
	We use the word `subnet' in the sense of Willard; be aware that there are other (inequivalent) definitions of `subnet' in use.
	Precisely, if $(x_\alpha)_{\alpha \in \Lambda}$ is a net in $X$, then for us a \textit{subnet} of $(x_\alpha)_{\alpha \in \Lambda}$ is another net $(y_\beta)_{\beta \in M}$ in $X$, together with an order-preserving function $h \colon M \to \Lambda$, with the following properties:
	\begin{enumerate}
	\item $y_\beta = x_{h(\beta)}$ for all $\beta \in M$;
	\item $h$ is cofinal, meaning that for every $\alpha \in \Lambda$ there exists $\beta \in M$ such that 
	$h(\beta) \geq \alpha$.	
	\end{enumerate}
	 We often denote a subnet of a sequence $(x_k)_{k \in \N}$ as $(x_{h(\beta)})_{\beta \in M}$, where $h$ is some unspecified function from a directed set $M$ to $\N$, which is order-preserving and cofinal. It is a standard result that a topological space $X$ is compact if and only if every net has a convergent subnet.
	
	\subsection{The Stone--{\v C}ech Compactification}	 
	Given a set $X$ we write $\beta X$ for the Stone--{\v C}ech compactification of $X$.  In fact $\ell^1(X)^{**}$ may be identified isometrically with $M(\beta X)$, the Banach space of complex regular Borel measures on $\beta X$. If $G$ is a group (or more generally a semigroup), identifying $\beta G$ with the set of point-masses inside $M(\beta G)$ makes $\beta G$ into a semigroup for either Arens product. 
	
	Let $G$ be a group, and $\omega$ a weight on $G$. Even in the weighted setting Stone--{\v C}ech compactifications appear as important special subsets of $\ell^1(G, \omega)^{**}$, as we shall now explain. Following Dales and Dedania \cite{DD}, we define $\beta G_\omega$ to be the weak*-closure inside $\ell^1(G, \omega)^{**}$ of the set of normalised point masses $\{ \frac{1}{\omega(g)} \delta_g : g \in G \}$. The linear map $L_\omega \colon \ell^1(G) \to \ell^1(G, \omega)$ given by $L_\omega(f) = f/\omega$ is a surjective isometry, and the map $L_\omega^{**}|_{\beta G} \colon \beta G \to \beta G_\omega$ is a homeomorphism. Note however that $L_\omega$ does not usually respect convolutions, and that $\beta G_\omega$ is not naturally a semigroup unless $\omega = 1$. The functionals in our main example will belong to $(\beta \F_3)_\omega$, and we shall say more about this in Remark \ref{DDRemark}.

	\section{Outline of the Proof}
	\noindent
	Our aim is to construct a weight $\omega$ on $\F_3$ such that the radicals of $\ell^1(\F_3, \omega)^{**}$ are different for the two Arens products. Our method will be to define a certain infinite set $X$,  which generates $\F_3$ as a monoid, and then define our weight as $\omega(t) = \exp(|t|_X) \ (t \in \F_3)$. 
	
	We define the generating set $X$ as follows. For each $j \in \N$ let
	$$X_j = \{ v b^{5^{2j-1}}v^{-1} a^{5^{2j}}b^{5^{2j}} : v \in \F_3, \  |v| \leq 5^j \}$$
	and let 
	$$X_\infty = \bigcup_{j = 2}^\infty X_j.$$
	Finally, our generating set is 
	$$X = X_\infty \cup S.$$
	We shall write $|u|_X$ for the word length of $u \in \F_3$ with respect to the new generating set $X$.
	Of course, we always have $|u|_X \leq |u|$ for any $u \in \F_3$. Given $j \in \N$ and $v \in \F_3$ with $|v| \leq 5^j$, we shall write 
	$x(v,j) = v  b^{5^{2j-1}}v^{-1} a^{5^{2j}}b^{5^{2j}},$
	so that $X_j = \{ x(v,j) : |v| \leq 5^j\}$.
	
	Our strategy is then to define elements $\Phi_0, \Psi_0 \in \ell^1(\F_3, \omega)^{**}$ with the following properties:
	\begin{enumerate}
		\item writing $I = \ell^1(\F_3, \omega)^{**}\Box \Phi_0$, we have $I^{\Box 2} = \{ 0 \}$, which implies that
		 $$\Phi_0 \in \rad (\ell^1(\F_3, \omega)^{**}, \Box);$$		
		\item $\Phi_0 \Diamond \Psi_0$ is not quasi-nilpotent, so that $$\Phi_0 \notin \rad( \ell^1(\F_3, \omega)^{**}, \Diamond).$$
	\end{enumerate}
	The functional $\Phi_0$ will be defined as a weak*-accumulation point of the sequence
	$$\frac{1}{\omega(a^{5^{2n}}b^{5^{2n}})} \delta_{a^{5^{2n}}b^{5^{2n}}} \quad (n \in \N).$$
	In order to prove that it satisfies condition (1) above, we shall need to prove the following key word length evaluation:
	\begin{equation}		\label{eqstar}
		|a^{5^{2n}}b^{5^{2n}}|_X =  5^{2n-1} +1 \quad (n \in \N).
	\end{equation}
	This will be done in Proposition \ref{3.5}(i). Showing that $|a^{5^{2n}}b^{5^{2n}}|_X \leq 5^{2n-1} +1$ is one line (just note that $a^{5^{2n}}b^{5^{2n}} = (b^{-1})^{5^{2n-1}}x(1,n)$), but showing the reverse inequality requires quite a bit of work. 	
	Similarly, $\Psi_0$ will be defined as a weak*-accumulation point of the sequence 
	$$\frac{1}{\omega(c^n)} \delta_{c^n} = \delta_{c^n} \quad (n \in \N).$$
	In order to show that condition (2) above holds, the key calculation is the following:
	\begin{equation} 		\label{eqdagger}
		\left|a^{5^{2n_1}} b^{5^{2n_1}} c^{k_1} \cdots a^{5^{2n_r}}b^{5^{2n_r}} c^{k_r} \right|_X =  \sum_{i=1}^r (5^{2n_i-1} +1) + \sum _{i = 1}^r k_i,
	\end{equation}
	whenever $k_{i-1} > 3 \cdot 5^{n_i} \ (i = 2,3, \ldots, r)$. Together with Equation \eqref{eqstar} and Lemma \ref{3.1c} below, this will imply that 
	\begin{equation} 	\label{eqdoubledagger}
		\left|a^{5^{2n_1}} b^{5^{2n_1}} c^{k_1} \cdots a^{5^{2n_r}}b^{5^{2n_r}} c^{k_r} \right|_X  = \sum_{i=1}^r |a^{5^{2n_i}}b^{5^{2n_i}}|_X + \sum_{i=1}^r |c^{k_i}|_X
	\end{equation}
	whenever $k_{i-1} > 3 \cdot 5^{n_i} \ (i = 2,3, \ldots, r)$.
	Again,  proving that the left-hand side of \eqref{eqdagger} is at most the right-hand is straight-forward, and it is the reverse inequality that requires work.
	
	Section 4 and Section 5 will be devoted to proving \eqref{eqstar} and \eqref{eqdagger}. Specifically \eqref{eqstar} follows from Proposition \ref{3.5}(i) and \eqref{eqdagger} is Proposition \ref{3.8}. Then, in Section 6, $\Phi_0$ and $\Psi_0$ will be formally introduced, and conditions (1) and (2) will be proved. This will establish our main result (Theorem \ref{4.6}). 
	
	We now briefly discuss why we chose the particular group and weight that we did for our main example. First of all, the following lemma shows that we have to search for our example amongst non-abelian groups.
	
	\begin{lemma} 	\label{2.1}
		Let $\A$ be a commutative Banach algebra. Then $\rad(\A^{**}, \Box) = \rad(\A^{**}, \Diamond)$.
	\end{lemma}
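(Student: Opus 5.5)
The approach is to show that the two Arens products on $\A^{**}$ are opposite to each other: for commutative $\A$ we expect
$$\Phi \Diamond \Psi = \Psi \Box \Phi \quad (\Phi, \Psi \in \A^{**}).$$
We then use the fact that the Jacobson radical of an algebra coincides with that of its opposite algebra.

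For the first step, take nets $(a_\alpha)$ and $(b_\beta)$ in $\A$ that weak*-converge to $\Phi$ and $\Psi$ respectively. By \eqref{eqArens},
$$\Phi \Diamond \Psi = \lim_\beta \lim_\alpha a_\alpha b_\beta.$$
Commutativity of $\A$ gives $a_\alpha b_\beta = b_\beta a_\alpha$, so this equals $\lim_\beta \lim_\alpha b_\beta a_\alpha$, which by \eqref{eqArens} again (with the roles of the nets exchanged) is $\Psi \Box \Phi$. One could instead argue directly from the defining formulas: commutativity gives $\lambda\cdot a = a \cdot \lambda$ for $\lambda \in \A^*$ and $a \in \A$, hence $\Phi\cdot\lambda = \lambda\cdot\Phi$, and then
$$\langle \Phi \Diamond \Psi, \lambda\rangle = \langle \Psi, \lambda\cdot\Phi\rangle = \langle \Psi, \Phi\cdot\lambda\rangle = \langle \Psi\Box\Phi, \lambda\rangle.$$
Either route is routine.

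For the second step, note that $(\A^{**}, \Diamond)$ is the opposite algebra of $(\A^{**}, \Box)$, and both carry the same norm. Powers of a single element agree in an algebra and its opposite, so $\|\Phi^{\Diamond n}\| = \|\Phi^{\Box n}\|$, and quasi-nilpotence is the same in both. Now use the left-ideal description of the radical for $\Diamond$ and the right-ideal description for $\Box$, both recalled in Section 2:
$$\Phi \in \rad(\A^{**},\Diamond) \iff \sigma(\Psi\Diamond\Phi)=\{0\} \text{ for all } \Psi \iff \sigma(\Phi\Box\Psi)=\{0\} \text{ for all } \Psi \iff \Phi\in\rad(\A^{**},\Box).$$
The middle equivalence uses $\Psi\Diamond\Phi = \Phi\Box\Psi$ from the first step.

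There is one technical point: the radical was defined in Section 2 for unital algebras. If $\A^{**}$ has no identity, I would pass to the unitization, or use the characterisation via spectral radius $\lim_n \|x^n\|^{1/n}$, which does not require a unit. Both are harmless. No step is a real obstacle; the only care needed is keeping the order of the iterated limits straight in the first step.
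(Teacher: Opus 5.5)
Your proof is correct and takes essentially the same route as the paper. Both arguments use commutativity of $\A$ and the iterated weak*-limit formula to see that $\Diamond$ is the opposite product of $\Box$, i.e.\ $\Psi \Diamond \Phi = \Phi \Box \Psi$, and then match the left-ideal characterisation of the radical for one product with the right-ideal characterisation for the other; the paper proves one inclusion and calls the other analogous, whereas you give both directions in a single chain of equivalences.
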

	
	\begin{proof}
		Let $\Phi \in \rad( \A^{**}, \Box)$. Then $\Phi \Box \A^{**}$ consists of quasi-nilpotent elements. Letting $\Psi \in \A^{**}$, we note that, because $\A$ is commutative, $\Diamond$ is the opposite product of $\Box$ (as can be seen from \eqref{eqArens}), and therefore
		$\|(\Psi \Diamond \Phi )^{\Diamond n} \|^{1/n} =  \| (\Phi \Box \Psi)^{\Box n} \|^{1/n} \to 0,$
		as $n \to \infty$. We have shown that $\A^{**} \Diamond \Phi$ consists of quasi-nilpotent elements, and hence $\Phi$ is radical with respect to $\Diamond$.		
		This proves that $\rad( \A^{**}, \Box) \subseteq \rad(\A^{**}, \Diamond)$, and the reverse inclusion is proved analogously.
	\end{proof}
	
	It remains an open question whether or not the two radicals of $\ell^1(G)^{**}$, for some group $G$, can be distinct sets. However, at the time of writing the only known method of constructing radical elements of $\ell^1(G)^{**}$ involve using invariant means on $G$ and its subgroups, and the resulting elements are always radical for both Arens products. It is a long standing open problem whether $\ell^1(\F_n)^{**} \ (n \geq 2)$ has a non-zero radical (for either Arens product). In the light of these factors, we decided to search for our example amongst non-trivial weights on very non-abelian groups, eventually arriving at the main example of this paper.  
	
	Although Question \ref{Q1} remains open for $\omega = 1$, there is an interesting property that our radical element has that cannot be shared by radical elements in the unweighted setting: it belongs to  $(\beta \F_3)_\omega$. See Remark \ref{DDRemark} for details.

	\section{Basic Lemmas For Calculating Word Lengths}
	\noindent
	In this section we present some basic lemmas that will be important tools for calculating $X$-word lengths (and hence values of $\omega$) in Section 5. We note that a general product of elements of $X$ has the form $\pdt{d}_1 x_1 \pdt{d}_2 \cdots \pdt{d}_n x_n \pdt{d}_{n+1}$, where $x_i \in X_\infty \ (i = 1, \ldots, n)$, and ${d_{ij} \in S} \ (i = 1, \ldots, n+1, \ j = 1, \ldots, p_i)$, where we allow for the possibility that some of the products $\pdt{d}_i$ are empty. Throughout this section and the next we shall refer to a particular way of writing an element of $\F_3$ as a product of elements of $X$ as an \textit{expression over $X$}, or sometimes as an \textit{$X$-word}. Note that, since $S \subseteq X$, if an expression $\pdt{d}_1 x_1 \pdt{d}_2 \cdots \pdt{d}_n x_n \pdt{d}_{n+1}$ is $X$-minimal, then $d_{i, 1} d_{i,2} \cdots d_{i,p_i}$ is a reduced word for each $i=1, \ldots, n+1$.
	
	The first lemma of this section is elementary.
	
	\begin{lemma} 	\label{3.1c}
		Let $k \in \N$. Then $|c^k|_X = k$.
	\end{lemma}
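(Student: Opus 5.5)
The upper bound $|c^k|_X \leq k$ is immediate, since $c \in S \subseteq X$ and $c^k$ is a product of $k$ copies of $c$. For the reverse inequality I will use the homomorphism $\varphi_c \colon \F_3 \to \Z$ from Section 2. The point is that every generator in $X$ raises $\varphi_c$ by at most one.

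For the letters, $\varphi_c(s) \in \{-1,0,1\}$ for each $s \in S$. For the elements of $X_\infty$, take $x(v,j) = v b^{5^{2j-1}} v^{-1} a^{5^{2j}} b^{5^{2j}}$. Because $\varphi_c$ maps into the abelian group $\Z$, the contributions of $v$ and $v^{-1}$ cancel, and so
$$\varphi_c(x(v,j)) = \varphi_c(v) - \varphi_c(v) + 5^{2j-1}\varphi_c(b) + 5^{2j}\varphi_c(a) + 5^{2j}\varphi_c(b) = 0.$$
Hence $\varphi_c(x) \leq 1$ for every $x \in X$.

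Now suppose $c^k = x_1 x_2 \cdots x_n$ with each $x_i \in X$. Applying $\varphi_c$ to both sides gives
$$k = \varphi_c(c^k) = \sum_{i=1}^n \varphi_c(x_i) \leq n.$$
Therefore $|c^k|_X \geq k$, and combined with the upper bound this gives $|c^k|_X = k$.

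No step here is difficult. The only thing to check is that conjugation by $v$ contributes nothing to $\varphi_c$, and this holds because the target group $\Z$ is abelian.
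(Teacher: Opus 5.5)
Your proof is correct and follows the paper's argument: the upper bound comes from $c \in X$, and the lower bound from applying the homomorphism $\varphi_c$ and using $\varphi_c(x) \leq 1$ for every $x \in X$. You also spell out why $\varphi_c$ vanishes on $X_\infty$, which the paper leaves implicit.
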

	
	\begin{proof}
		Certainly $|c^k|_X \leq k$. If $x_1,x_2, \ldots, x_n \in X$ and $x_1x_2 \cdots x_n = c^k$ then, noting that $\varphi_c(x_i) \leq 1$ for each $i$, we get 
		$k = \varphi_c(c^k) = \sum_{i=1}^n \varphi_c(x_i) \leq n.$
		The result follows.
	\end{proof}
	
	The next lemma will be a useful tool for calculating word lengths in the next section.
	
	\begin{lemma} 	\label{3.0}
		Let $u \in \F_3$. Then $|u| \geq |\varphi_a(u)| + |\varphi_b(u)| + |\varphi_c(u)|.$
	\end{lemma}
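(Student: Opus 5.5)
The plan is to write $u$ as a reduced word $u = d_1 d_2 \cdots d_m$ with $d_i \in S$ and $m = |u|$, and apply the homomorphisms $\varphi_a$, $\varphi_b$, $\varphi_c$ letter by letter. Each letter contributes to exactly one of these homomorphisms: for $d \in \{a, a^{-1}\}$ we have $\varphi_a(d) = \pm 1$ and $\varphi_b(d) = \varphi_c(d) = 0$, and similarly for the other letters. Equivalently, for every $d \in S$,
$$|\varphi_a(d)| + |\varphi_b(d)| + |\varphi_c(d)| = 1 .$$

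Next I use that $\varphi_a$, $\varphi_b$, $\varphi_c$ are homomorphisms together with the triangle inequality in $\Z$. This gives
$$|\varphi_a(u)| = \Bigl|\sum_{i=1}^m \varphi_a(d_i)\Bigr| \leq \sum_{i=1}^m |\varphi_a(d_i)|,$$
and the same bounds for $\varphi_b$ and $\varphi_c$. Adding the three inequalities and using the identity above for each $d_i$, the right-hand sides sum to $\sum_{i=1}^m 1 = m = |u|$. This yields the claimed inequality.

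There is no real obstacle here. The only point to note is that the argument does not even need the word to be reduced: any expression of $u$ as a product of $m$ letters gives $|\varphi_a(u)| + |\varphi_b(u)| + |\varphi_c(u)| \leq m$. Taking the minimum over all such expressions gives the bound in terms of $|u|$.
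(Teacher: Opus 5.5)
Your proof is correct and is essentially the paper's argument. The paper counts the positive and negative occurrences $\alpha_\pm,\beta_\pm,\gamma_\pm$ of each letter in the reduced word and uses $\alpha_+ + \alpha_- \geq |\alpha_+ - \alpha_-| = |\varphi_a(u)|$ (and similarly for $b$ and $c$), which is your letter-by-letter triangle inequality grouped by letter.
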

	
	\begin{proof}
		Let $\alpha_+$ denote the number of positive $a$ terms in $u$ as a reduced word, and let $\alpha_-$ denote the number of negative $a$ terms. Likewise define $\beta_+$ and $\beta_-$ for the $b$ terms, and $\gamma_+$ and $\gamma_-$ for the $c$ terms. Then
		\begin{equation}		\label{eqphi}
			|u| = {\alpha_+} + {\alpha _-} + {\beta_+} + {\beta_-} + {\gamma_+} + {\gamma_-}
		\end{equation}
		and
		$$\varphi_a(u) = \alpha_+ - \alpha_-, \qquad \varphi_b(u) = \beta_+ - \beta_, \qquad \varphi_c(u) = \gamma_+- \gamma_-.$$
		It follows that $\alpha_+ \geq \alpha_+ - \alpha_- = \varphi_a(u)$, and likewise $\alpha_- \geq -\varphi_a(u)$, so that $\alpha_+ + \alpha_- \geq |\varphi_a(u)|.$ Similarly we can show that $\beta_+ + \beta_- \geq |\varphi_b(u)|$, and $\gamma_+ + \gamma_- \geq |\varphi_c(u)|.$ The result now follows from \eqref{eqphi}.
	\end{proof}
	
	The following lemma is a variation on a well-known result, and its corollary tells us how to write the generators from $X_\infty$ as reduced words.
	
	\begin{lemma} 		\label{3.1}
		Let $k \in \N$ and let $v \in \F_3$ with $|v| \leq k$. Then $vb^kv^{-1} = w b^k w^{-1}$, where $w$ is a reduced word ending in $a^{\pm 1}$ or $c^{\pm 1}$, and $|w| \leq k $. No further cancellations can occur between $w$, $w^{-1}$ and $b^k$.
	\end{lemma}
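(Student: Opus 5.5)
We first reduce to the case where $v$ is a reduced word. Write $v$ in reduced form; reducing cannot increase length, so we still have $|v| \leq k$. Now strip off any trailing power of $b$. Write $v = w b^m$ with $m \in \Z$, where $w$ is reduced and its last letter is not $b^{\pm 1}$ (so $w$ is empty or ends in $a^{\pm1}$ or $c^{\pm1}$). Since $w$ does not end in $b^{\pm1}$, we have $|v| = |w| + |m|$. Powers of $b$ commute with one another, so
$$vb^kv^{-1} = w b^m b^k b^{-m} w^{-1} = w b^k w^{-1}.$$
Moreover $|w| \leq |v| \leq k$, as required. If $w$ is empty, then $vb^kv^{-1} = b^k$, which is trivially of the claimed form (read "ending in $a^{\pm1}$ or $c^{\pm1}$" vacuously for $w=1$). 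From now on we take $w$ nonempty.

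Next we check that no cancellation occurs in $w\, b^k\, w^{-1}$.

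At the junction between $w$ and $b^k$: the last letter of $w$ is $a^{\pm1}$ or $c^{\pm1}$, while $b^k$ begins with $b$ (note $k \geq 1$). These letters are not inverse to each other, so nothing cancels.

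At the junction between $b^k$ and $w^{-1}$: the first letter of $w^{-1}$ is the inverse of the last letter of $w$, so it is again $a^{\pm1}$ or $c^{\pm1}$. It therefore cannot cancel against the final $b$ of $b^k$.

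Each of $w$, $b^k$ and $w^{-1}$ is individually reduced. Since neither junction admits a cancellation, the concatenation $w b^k w^{-1}$ is already a reduced word. By uniqueness of reduced forms, this is exactly the reduced form of $vb^kv^{-1}$. This establishes the final sentence of the lemma.

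There is no real obstacle in this argument. The only point needing care is that the stated bound $|w|\le k$ follows from $|w| \le |v|$, and the hypothesis $|v|\le k$ gives the rest. (That hypothesis is not needed for the no-cancellation claim; presumably the bound is used later in the paper, where it guarantees that the central block $b^k$ survives intact.)
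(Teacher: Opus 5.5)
Your proposal is correct and follows essentially the same route as the paper: write $v = wb^m$ with $w$ empty or ending in $a^{\pm1}$ or $c^{\pm1}$, observe $vb^kv^{-1} = wb^kw^{-1}$, and check that neither junction admits a cancellation. You also make explicit two points the paper leaves implicit: the bound $|w| \le |v| \le k$, and the case $v = b^m$ with $m \neq 0$, where $w$ is empty.
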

	
	\begin{proof}
		The case $v = 1$ is trivial. Otherwise we can write $v = wb^j$ for some $j \in \Z$, where $w$ ends in $a^{\pm 1}$ or $c^{\pm 1}$. Then $v^{-1} = b^{-j}w^{-1}$, where $w^{-1}$ begins in $a^{\pm 1}$ or $c^{\pm 1}$, so that no cancellation can occur in $b^kw^{-1}$. We then have
		$$vb^kv^{-1} = wb^j b^k b^{-j} w^{-1} = wb^k w^{-1},$$
		and the final expression is reduced.
	\end{proof}
	
	Let $j \in \N,$ $j \geq 2$, and let $v \in \F_3$ with $|v| \leq 5^j$. From now on, when we consider the generator $x(v,j) \in X_\infty$, we may assume without loss of generality that $vb^{5^{2j-1}}v^{-1}$ is reduced, since, taking $w$ as in the previous lemma, we have $x(v,j) = x(w,j)$. If we need a formula for the whole of $x(v,j)$ as a reduced word, we have the following.
	
	\begin{corollary}		\label{3.1a}
		Let $n \in \N$ and let $v \in \F_3$ with $|v| \leq 5^n$. As a reduced word $x(v,n)$ has the form
		$a^k w b^{5^{2n-1}} w^{-1} a^{5^{2n}-k}b^{5^{2n}}$, and  $w \in \F_3$ and $k \in \N$ satisfy $|w|+k \leq 5^n$. 
	\end{corollary}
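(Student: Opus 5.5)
By Lemma \ref{3.1}, applied with $k = 5^{2n-1}$, we may write $vb^{5^{2n-1}}v^{-1} = w_0 b^{5^{2n-1}} w_0^{-1}$. Here $w_0$ is reduced, is either empty or ends in $a^{\pm1}$ or $c^{\pm1}$, and satisfies $|w_0| \leq |v| \leq 5^n$. (The proof of Lemma \ref{3.1} writes $v = w_0 b^j$, so $|w_0| \leq |v|$.) Moreover $w_0 b^{5^{2n-1}} w_0^{-1}$ is reduced. Hence
$$x(v,n) = w_0 b^{5^{2n-1}} w_0^{-1} a^{5^{2n}} b^{5^{2n}},$$
and the only remaining cancellation can happen at the junction between $w_0^{-1}$ and $a^{5^{2n}}$.

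Let $k \geq 0$ be the largest integer such that $w_0$ begins with $a^{k}$. Equivalently, $w_0^{-1}$ ends in $a^{-k}$. Write $w_0 = a^k w$, where $w$ does not begin with $a^{\pm1}$. It cannot begin with $a^{-1}$, because $w_0$ is reduced, and it does not begin with $a$ by maximality of $k$. Then $w_0^{-1} a^{5^{2n}} = w^{-1} a^{-k} a^{5^{2n}} = w^{-1} a^{5^{2n}-k}$. Since $k \leq |w_0| \leq 5^n < 5^{2n}$, the exponent $5^{2n}-k$ is positive. Because $w^{-1}$ does not end in $a^{\pm1}$, no further cancellation takes place there.

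We must also check that cancellation cannot propagate elsewhere. The new junction $a^k w$ is reduced because it is a decomposition of the reduced word $w_0$. The junction $w b^{5^{2n-1}}$ is reduced, since the reduced word $w_0 b^{5^{2n-1}}$ contains it as a terminal segment. This needs a small case check: if $w$ is empty, then $w_0 = a^k$ and the junction is $a^k b^{5^{2n-1}}$, which is still reduced. The same reasoning handles $b^{5^{2n-1}} w^{-1}$, and $a^{5^{2n}-k} b^{5^{2n}}$ is clearly reduced. So
$$x(v,n) = a^k w b^{5^{2n-1}} w^{-1} a^{5^{2n}-k} b^{5^{2n}}$$
is reduced, and $|w| + k = |w_0| \leq 5^n$. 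If the corollary intends $k \in \N$ to include $0$, this is exactly the stated form. The only real care needed is the bookkeeping at the junctions, which we have done above: the maximality of $k$ blocks further cancellation against $a^{5^{2n}}$, and the bound $k < 5^{2n}$ keeps the power of $a$ positive.
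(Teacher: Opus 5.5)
Your argument is correct and follows the paper, whose proof just says the corollary is immediate from Lemma \ref{3.1}. You supply the details it omits: the bound $|w_0|\le |v|\le 5^n$ comes from the proof of Lemma \ref{3.1}, not its statement, which only gives $|w_0|\le 5^{2n-1}$. You are also right that $k=0$ must be allowed (e.g.\ $v=1$).

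There is one harmless slip. When $k=0$ we have $w=w_0$, which may begin with $a^{-1}$ (e.g.\ $v=a^{-1}$), so your claim that $w$ does not begin with $a^{\pm1}$ fails there. This does not matter: ruling out cancellation against $a^{5^{2n}-k}$ only needs $w$ not to begin with $a$, and maximality of $k$ guarantees that.
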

	
	\begin{proof}
		Immediate from the previous lemma.
	\end{proof}
	
	The following is an elementary but crucial property of $X$-minimal expressions that we shall use repeatedly.
	
	\begin{lemma}		\label{3.3}
		Let $j \in \N$ and let $v \in \F_3$ with $|v| \leq 5^j$. Suppose that 
		\begin{equation}	\label{eq3.2}
			\pdt{d} x(v,j) \pdt{e}
		\end{equation}
		is $X$-minimal, where $d_i, e_i \in S$. Then less than half of the letters in $x(v,j)$ (as a reduced word) are cancelled by $\pdt{d}$ and $\pdt{e}$.
	\end{lemma}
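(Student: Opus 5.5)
The proof is by contradiction: if at least half of the letters of $x(v,j)$ were cancelled, we replace the whole expression \eqref{eq3.2} by a strictly shorter expression over $X$ for the same element.

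Write $\pdt{d} = d_1\cdots d_p$ and $\pdt{e} = e_1\cdots e_q$. By $X$-minimality both are reduced words. Let $N = |x(v,j)|$, the length of $x(v,j)$ as a reduced word, whose shape is given by Corollary \ref{3.1a}. Set $g = \pdt{d}\,x(v,j)\,\pdt{e}$. Reduce the concatenation of the three reduced words $\pdt{d}$, $x(v,j)$, $\pdt{e}$ in two stages.

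\emph{Left stage.} In $\pdt{d}\,x(v,j)$, a final segment of $\pdt{d}$ of length $r$ cancels against an initial segment of $x(v,j)$ of length $r$.

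\emph{Right stage.} In the result, a final segment of length $s$ cancels against an initial segment of $\pdt{e}$ of length $s$. This final segment may eat into $x(v,j)$ and possibly beyond it into $\pdt{d}$.

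Let $m$ be the number of letters of $x(v,j)$ that get cancelled, and suppose for contradiction that $m \geq N/2$. Each cancelled letter of $x(v,j)$ is cancelled against a distinct letter of $\pdt{d}$ or $\pdt{e}$, so $p + q \geq m$. The surviving letters of the three words give a word for $g$ of length at most
\[
(p - r') + (N - m) + (q - s'),
\]
where $r'$ and $s'$ are the numbers of letters of $\pdt{d}$ and $\pdt{e}$ used to cancel letters of $x(v,j)$, so that $r' + s' = m$. Any further mutual cancellation between $\pdt{d}$ and $\pdt{e}$ only shortens this word further.

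Hence $|g|_X \leq |g| \leq p + q + N - 2m \leq p + q$, using $m \geq N/2$. The original expression has length $p + q + 1$, so we have found a strictly shorter expression over $X$, since $S \subseteq X$. This contradicts $X$-minimality, so $m < N/2$.

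The main obstacle is the bookkeeping in the right stage, where $\pdt{e}$ may cancel completely through $x(v,j)$ and into $\pdt{d}$. To handle this cleanly, I would argue directly with the reduced form of the concatenated word $\pdt{d}\,x(v,j)\,\pdt{e}$. Each deletion of an adjacent inverse pair removes two letters, and every letter of $x(v,j)$ that is removed is paired with a letter outside $x(v,j)$, because $x(v,j)$ is itself reduced. Counting these pairs gives the inequality $|g| \leq p + q + N - 2m$ without any case analysis.
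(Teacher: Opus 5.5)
Your proposal is correct and is essentially the paper's argument. In both, each cancelled letter of $x(v,j)$ uses up a distinct letter of $\pdt{d}$ or $\pdt{e}$, so rewriting over $S\subseteq X$ gives at most $p+q$ symbols against the original $p+q+1$. The paper rewrites only the middle block $\pdt{d}'x(v,j)\pdt{e}'$, whereas you rewrite the whole product as its reduced word; you also justify the pairing claim via reducedness of $x(v,j)$, which the paper leaves implicit.
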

	
	\begin{proof}
		Assume otherwise. Write $\pdt{d} = d_1 d_2 \cdots d_ld_{l+1} \cdots d_{l+p}$ and 
		$\pdt{e} = e_1 e_2 \cdots e_m e_{m+1} \cdots e_{m+q}$, where $d_{l+1}, \ldots, d_{l+p}, e_1, \ldots, e_m$ are the letters that cancel, and $d_1, \ldots, d_l, e_{m+1}, \ldots e_{m+q}$ do not. Then \eqref{eq3.2} contains $p+q+m+l +1$ symbols from $X$, and by hypothesis $p+m \geq \frac{1}{2}|x(v,j)|$. Write
		$$\pdt{d}' = d_{l+1} d_{l+2} \cdots d_{l+q}, \qquad \pdt{e}' = e_1 e_2 \cdots e_m.$$
		Then, since at least half of the letters are cancelled, $\pdt{d}'x(v,j)\pdt{e}' = t_1t_2\cdots t_n$ for some $n < \frac{1}{2}|x(v,j)|$, and $t_1, \ldots, t_n \in S$. We get
		$$\pdt{d}x(v,j) \pdt{e} =  d_1d_2\cdots d_l \, t_1t_2 \cdots t_n \, e_{m+1} e_{m+2} \cdots e_{m+q},$$
		and the latter expression contains $l+q+n \leq l+q+ \frac{1}{2}|x(v,j)| \leq l+q +p + m$ symbols from $X$, which is less than the original expression \eqref{eq3.2}, contradicting that hypothesis that it is $X$-minimal.
	\end{proof}	
	
	\begin{lemma}		\label{3.2}
		Let $j \in \N$ and let $v \in \F_3$ with $|v| \leq 5^j$. Suppose that $\pdt{d} x(v, j)$ is $X$-minimal, where $d_1, \ldots, d_p \in S.$ Write 
		$vb^{5^{2j-1}}v^{-1} = w b^{5^{2j-1}} w^{-1}$ as a reduced word, as in Lemma \ref{3.1}. Suppose that $\pdt{d}$ cancels $wb^{5^{2j-1}}$. Then we may rewrite $\pdt{d}x(v,j)$ as $\pdt{e}x(1,j)$, for some 
		$e_1, \ldots, e_{q} \in S$, without increasing the number of symbols from $X$ in the expression.
	\end{lemma}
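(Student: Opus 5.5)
The plan is to split the reduced form of $x(v,j)$ into the part cancelled by $\pdt{d}$ and the rest, and then rebuild that rest from $x(1,j)$ using $S$-letters only. By Corollary \ref{3.1a}, $x(v,j)$ is, as a reduced word, $a^k w b^{5^{2j-1}} w^{-1} a^{5^{2j}-k} b^{5^{2j}}$ with $|w|+k\le 5^j$. Here I read ``$\pdt{d}$ cancels $wb^{5^{2j-1}}$'' as saying that $\pdt{d}$ cancels the whole prefix $a^k w b^{5^{2j-1}}$. This must be the intended meaning, since $\pdt{d}$ can only cancel a prefix of the reduced word.

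First I write $\pdt{d} = \pdt{d}'\pdt{d}''$, where $\pdt{d}''$ consists of the letters that cancel against $x(v,j)$. So $\pdt{d}''$ is the inverse of some prefix of $x(v,j)$ that contains $a^kwb^{5^{2j-1}}$. Let that prefix be $a^k w b^{5^{2j-1}} y$, where $y$ is a prefix of $w^{-1}a^{5^{2j}-k}b^{5^{2j}}$. Then
$$\pdt{d}'' = y^{-1} b^{-5^{2j-1}} w^{-1} a^{-k},$$
so the number of letters in $\pdt{d}''$ is $|y| + 5^{2j-1} + |w| + k$.

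Next I compute $\pdt{d}x(v,j)$ in terms of $x(1,j) = b^{5^{2j-1}}a^{5^{2j}}b^{5^{2j}}$ (reduced). We have
$$\pdt{d}x(v,j) = \pdt{d}'\, z, \qquad \text{where } y z = w^{-1}a^{5^{2j}-k}b^{5^{2j}},$$
so $z$ is the uncancelled suffix. On the other hand,
$$w^{-1}a^{5^{2j}-k}b^{5^{2j}} = w^{-1}a^{-k}\, b^{-5^{2j-1}}\, x(1,j).$$
Hence
$$\pdt{d}x(v,j) = \pdt{d}'\, y^{-1} w^{-1} a^{-k} b^{-5^{2j-1}}\, x(1,j).$$
I then take $\pdt{e}$ to be the $S$-letter string $\pdt{d}'$ followed by the letters of $y^{-1}$, then those of $w^{-1}$, then $a^{-k}$, then $b^{-5^{2j-1}}$. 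This string need not be reduced, but it is a legitimate expression over $S$.

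It remains to count. The new expression $\pdt{e}x(1,j)$ uses $|\pdt{d}'| + |y| + |w| + k + 5^{2j-1} + 1$ symbols of $X$. The original expression $\pdt{d}x(v,j)$ uses $|\pdt{d}'| + |y| + 5^{2j-1} + |w| + k + 1$ symbols. These are equal, so the number of symbols does not increase.

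The only delicate step is the bookkeeping of exactly which prefix is cancelled, which I have handled by introducing $y$. If $y$ extends into the $a$-block or the $b$-block, the identity $yz = w^{-1}a^{5^{2j}-k}b^{5^{2j}}$ still holds, so the argument is unchanged. $X$-minimality is not really needed beyond guaranteeing that $\pdt{d}$ is reduced, so that its cancelled letters are literally the inverse of a prefix.
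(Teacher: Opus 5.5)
Your proof is correct and takes essentially the same route as the paper. You strip off the cancelled inverse of the prefix $wb^{5^{2j-1}}$ (your $a^k w b^{5^{2j-1}}$, which is right, since the lemma's $w$ is the one from Lemma \ref{3.1}), rewrite via $a^{5^{2j}}b^{5^{2j}} = b^{-5^{2j-1}}x(1,j)$, and count letters. The differences are only cosmetic. The paper puts everything in front of $[wb^{5^{2j-1}}]^{-1}$ into a single word $y$ instead of tracking the extra cancelled part separately, and it takes $\pdt{e}$ reduced, which gives $\leq$ where your unreduced string gives equality.
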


	\begin{proof}
		We can write $\pdt{d} = yz$ (reduced), where $z = [wb^{5^{2j-1}}]^{-1}$, and $|z| = |w| + 5^{2j-1}$, and $|\pdt{d}| = |y| + |z|$. We have 
		$$\pdt{d}x(v,j) = yw^{-1}a^{5^{2j}}b^{5^{2j}} = yw^{-1} b^{-5^{2j-1}} x(1,j).$$
		Let $e_1, \ldots, e_{q}$ be the letters appearing in $yw^{-1}b^{-5^{2j-1}}$ as a reduced word, so that $\pdt{e} = yw^{-1}b^{-5^{2j-1}}$.
		Then $\pdt{e}x(1,j) = \pdt{d}x(v,j)$ and
		$$|\pdt{e}| \leq |y| + |w^{-1}| + 5^{2j-1} = |y| + |z| = |\pdt{d}|.$$
		Therefore the number of symbols from $X$ in the expression $\pdt{e}x(1,j)$, which is $1+|\pdt{e}|$, is at most $1+|\pdt{d}|$, giving the result.
	\end{proof}
	
	Let $n \in \N$, let $k_1, \ldots, k_n \in \N$, and let $v_1, \ldots, v_n \in \F_3$ with $|v_i| \leq 5^{k_i} \ (i = 1, \ldots, n)$. We shall say that an expression 
	$$\pdt{d}_1 x(v_1, k_1) \cdots \pdt{d}_n x(v_n, k_n) \pdt{d}_{n+1},$$
	 where $d_{ij} \in S$, is \textit{super $X$-minimal} if
	 \begin{enumerate}
	 	\item $v_ib^{5^{2k_i-1}}v_i^{-1}$ is reduced for every $i = 1, \ldots, n$, and 
	 	\item  whenever some $\pdt{d}_i$ cancels $v_i b^{5^{2k_i-1}}$ we have $v_i = 1$.
	 \end{enumerate} By Lemma \ref{3.1} and Lemma \ref{3.2} every $X$-minimal expression is equivalent to a super $X$-minimal expression. 
	 If we instead use the form from Corollary \ref{3.1a} to write 
	 $$x(v_i, k_i) = a^{\nu_i} w_i b^{5^{2k_i - 1}} w_i^{-1} a^{5^{2k_i} - \nu_i} b^{5^{2k_i}} \text{(reduced)} \quad (i = 1, \ldots, n),$$
	 for some $\nu_1, \ldots, \nu_n \in \N$, and $w_1, \ldots, w_n \in \F_3$, then condition (2) above is equivalent to saying that, whenever some $\pdt{d}_i$ cancels $a^{\nu_i} w_i b^{5^{2k_i - 1}}$, we have $w_i = 1$ and $\nu_i = 0$. Most of our results in Section 5 include the hypothesis that the $X$-words are super $X$-minimal. We suspect that most of the results are still true without this assumption, but including it reduces the number of cases that have to be checked.
	 


	\section{Proof of the Key Word Length Results}
	\noindent	
	In this section we prove Equations \eqref{eqstar} and \eqref{eqdagger} -- the two key results about $X$-word lengths that we shall need for our main result. These are Proposition \ref{3.5}(i) and Proposition \ref{3.8}. 
	We begin with a short technical lemma that we shall need in Case 4 in the proof of Lemma \ref{3.7}.

	\begin{lemma}		\label{3.6}
		Let $j \in \N$, and let $v \in \F_3$ satisfy $|v| \leq 5^j$. Write 
		$x(v,j) = s_1s_2 \cdots s_m$ as a reduced word, where $m = |x(v,j)|$ and $s_1, \ldots, s_m \in S$.
		Suppose that $z$ is an initial block of these letters, and that $y$ is the rest (so that in particular $x(v,j) = zy$).
		Suppose further that $|z| < \frac{1}{2} |x(v,j)|$. Then $\theta(y) \geq |z|$. 
	\end{lemma}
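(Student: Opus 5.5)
The plan is to compute everything explicitly from the reduced form in Corollary \ref{3.1a}. Write $N = 5^{2j-1}$ and $M = 5^{2j}$. By Corollary \ref{3.1a},
$$x(v,j) = a^k w b^{N} w^{-1} a^{M-k} b^{M} \quad \text{(reduced)},$$
with $|w| + k \leq 5^j$. Hence
$$m = |x(v,j)| = 2|w| + N + 2M \qquad \text{and} \qquad \theta(x(v,j)) = N + 2M,$$
since the contributions of $w$ and $w^{-1}$ cancel under the homomorphism $\theta$. Put $P = a^k w b^N w^{-1}$ and $Q = a^{M-k}b^M$, so that $x(v,j) = PQ$ (reduced) and $|P| = k + 2|w| + N$. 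Since $z$ is an initial block of letters, either $z$ is an initial segment of $P$, or $z = Pz'$ with $z'$ a nonempty initial segment of $Q$. I will treat these two cases separately.

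\emph{Case 1: $z$ is a proper initial segment of $Q$'s predecessor, i.e.\ $z$ extends into $Q$.} Then $y$ is a final segment of $Q$. Every letter of $Q$ is $a$ or $b$ with positive exponent, so $\theta(y) = |y| = m - |z|$. The hypothesis $|z| < \frac{1}{2}m$ gives $m - |z| > |z|$, which is the required inequality.

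\emph{Case 2: $z$ is an initial segment of $P$.} Here I use $\theta(z) \leq |z|$, which holds since $\theta$ takes values in $\{-1,0,1\}$ on letters. Combining this with $\theta(y) = \theta(x(v,j)) - \theta(z)$ gives
$$\theta(y) \geq N + 2M - |z|.$$
So it suffices to show $2|z| \leq N + 2M$. In this case $|z| \leq |P| = k + 2|w| + N \leq 2\cdot 5^j + N$. The claim therefore reduces to
$$4\cdot 5^j + N \leq 2\cdot 5^{2j},$$
which holds for all $j \geq 1$ because $N = 5^{2j-1} \leq 5^{2j}$ and $4\cdot 5^j \leq 5^{2j}$.

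Case 1 is immediate. The only step needing care is the crude bound in Case 2: it relies on the fact that the conjugating part $a^k w$ has length at most $5^j$, which is tiny compared with $M = 5^{2j}$.
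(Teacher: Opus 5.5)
Your proof is correct and takes essentially the same route as the paper. The case split is the same: either $y$ lies inside $a^{5^{2j}-k}b^{5^{2j}}$, where $\theta(y)=|y|>|z|$, or it does not, and then the bound $k+|w|\le 5^j$, tiny against $5^{2j}$, finishes the job. The only difference is cosmetic: the paper lower-bounds $\theta(y)$ directly, losing at most $2|w|$ to $w,w^{-1}$, to get the stronger $\theta(y)\ge\frac12|x(v,j)|$, whereas you use $\theta(y)=\theta(x(v,j))-\theta(z)\ge 5^{2j-1}+2\cdot 5^{2j}-|z|$ with $|z|\le|P|$, which in that case does not even need the hypothesis $|z|<\frac12|x(v,j)|$.
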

	
	\begin{proof}
		In fact we shall prove that $\theta(y) \geq \frac{1}{2} |x(v,j)|$, which immediately gives the result. 
		By Corollary \ref{3.1a}, as a reduced word 
		$$x(v,j) = a^k w b^{5^{2j-1}} w^{-1} a^{5^{2j} - k} b^{5^{2j}}, $$
		where $|w|+ k \leq 5^j$.
		There are two cases to consider.		
		First, suppose that $y|a^{5^{2j}-k}b^{5^{2j}}$.
		Then in fact $\theta(y) = |y|$, which is at least 
		$\frac{1}{2} |x(v,j)|$ by hypothesis.
		
		The only other possibility is that $y$ ends  
		$a^{5^{2j}-k}b^{5^{2j}}$, but also contains some earlier terms. The only negative contributions to $\theta(y)$ can come from $w$ and $w^{-1}$, so that a crude lower bound is given to us by
		\begin{align*}
			\theta(y) &\geq \theta(a^{5^{2j}-k}b^{5^{2j}}) - 2|w| = 2\cdot 5^{2j} -k - 2|w| \\
			&\geq 2 \cdot 5^{2j} - 2\cdot 5^j \geq 5^{2j} + 5^{2j-1} + 5^j 
			\geq \tfrac{1}{2} |x(v,j)|,
		\end{align*}
		as required.
	\end{proof}

	\begin{lemma}		\label{3.7} 
		Let $n \in \N$ and let $x_1, \ldots, x_n \in X_\infty$. Suppose that the expression
		\begin{equation}	\label{eq3.15}
			\pdt{d}_1x_1 \pdt{d}_2 \cdots \pdt{d}_n x_n
		\end{equation}
		is super $X$-minimal, where $x_i \in X_\infty$ and $d_{ij} \in S$ for all $i,j$. Write 
		$$x_i = a^{\nu_i}w_ib^{5^{2k_i-1}}w_i^{-1}a^{5^{2k_i} - \nu_i}b^{5^{2k_i}} \ \text{(reduced)} \quad (i = 1, \ldots, n),$$
		as in Corollary \ref{3.1a}, where $\nu_1, \ldots, \nu_n \in \N_0$, $w_1, \ldots, w_n \in \F_3$, and $k_1, \ldots, k_n \in \N$.			
		Fix a sequence of cancellations that reduces the expression \eqref{eq3.15}.
		Then if any terms from $a^{5^{2k_n} - \nu_n}$ are cancelled, then they are cancelled by letters from $\pdt{d}_n$.	
	\end{lemma}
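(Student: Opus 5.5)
The plan is to argue through the matching structure of free reduction rather than through the particular order of cancellations. Whatever sequence of cancellations is chosen to reduce \eqref{eq3.15}, it pairs off cancelled letters into inverse pairs, and these pairs are non-crossing: if a letter at position $p$ cancels one at position $q>p$, then every letter strictly between them is cancelled by a letter also strictly between them. Suppose, for a contradiction, that some letter of $a^{5^{2k_n}-\nu_n}$ in $x_n$ is cancelled by a letter outside $\pdt{d}_n$. Since $x_n$ is the last symbol and a reduced word, that partner lies to the left of $\pdt{d}_n$, i.e.\ in $\pdt{d}_1x_1\cdots\pdt{d}_{n-1}x_{n-1}$. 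By the non-crossing property, everything strictly between the two letters is cancelled internally. This includes all of $\pdt{d}_n$ and the whole prefix $a^{\nu_n}w_nb^{5^{2k_n-1}}w_n^{-1}$ of $x_n$.

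First I deal with the block $b^{5^{2k_n-1}}$ of $x_n$, which must be cancelled by $b^{-1}$'s lying to its left. These cannot come from $a^{\nu_n}w_n$ inside $x_n$, since $x_n$ is reduced. They come either from $\pdt{d}_n$ or from further left.
\begin{itemize}
\item If $\pdt{d}_n$ cancels all of $a^{\nu_n}w_nb^{5^{2k_n-1}}$, super $X$-minimality forces $w_n=1$ and $\nu_n=0$. Then $x_n=x(1,k_n)$, its prefix before the $a$-block is just $b^{5^{2k_n-1}}$, and the offending $a^{-1}$ must come from strictly left of $\pdt{d}_n$ after $\pdt{d}_n$ has been entirely used up.
\item Otherwise some letter of $a^{\nu_n}w_nb^{5^{2k_n-1}}$ is itself cancelled by a letter left of $\pdt{d}_n$.
\end{itemize}
In both cases we obtain a letter of $x_n$ matched with a letter in $\pdt{d}_1x_1\cdots x_{n-1}$, with all of $\pdt{d}_n$ cancelled in between. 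If $n=1$ there is nothing to the left except $\pdt{d}_1$, so the partner is in $\pdt{d}_n$ after all; the case $n=1$ is therefore immediate.

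For $n\ge 2$ the key step is to show that $x_{n-1}$ acts as a barrier. Consider the letters of $x_{n-1}$ that are matched with letters to their right. By non-crossing, these form a suffix $y$ of $x_{n-1}$. I want $|y|<\tfrac12|x_{n-1}|$, so that $y$ lies inside $a^{5^{2k_{n-1}}-\nu_{n-1}}b^{5^{2k_{n-1}}}$ and hence consists only of positive letters. The natural tool is Lemma \ref{3.3}, but it only treats neighbours from $S$, and here $x_{n-1}$ may be partly cancelled against $x_n$ itself. To handle this I will use a direct estimate in the spirit of Lemma \ref{3.6}. The letters to the right of $x_{n-1}$ that can cancel $x_{n-1}$ are at most $|\pdt{d}_n|$ letters of $\pdt{d}_n$ plus letters of $x_n$. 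The only negative letters available in $x_n$ before its $a$-block are those in $w_n$ and $w_n^{\pm1}$, and there are at most $2\cdot 5^{k_n}$ of them. On the other hand, cancelling at least half of $x_{n-1}$ from the right needs roughly $\tfrac12|x_{n-1}|$ negative letters. When most of these come from $\pdt{d}_n$, this contradicts Lemma \ref{3.3}: the part of $\pdt{d}_n$ doing the cancelling could be shortened, as in that proof, reducing the $X$-length. Otherwise it is a $\theta$-count contradiction, since $\theta$ of the suffix is large by Lemma \ref{3.6} while the available negative contribution from $x_n$ is small.

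Once $y$ is known to consist only of positive letters $a,b$, at least one letter of $x_{n-1}$ survives, namely the leftmost letter of $x_{n-1}$ if nothing else. By non-crossing, no letter to the left of that surviving letter can be matched with any letter of $x_n$. So the partner of our $a$-letter lies in $y$, and is therefore a positive letter. That is impossible: the letters of $x_n$ under consideration are cancelled by inverse letters, and in particular an $a$ must be cancelled by an $a^{-1}$. Moreover, reaching the $a$-block of $x_n$ from $y$ would require $y$ to also absorb all of $\pdt{d}_n$ and $b^{5^{2k_n-1}}$ via positive letters, which cannot be done. This contradiction proves the lemma.

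The main obstacle is the barrier estimate in the third paragraph: showing that less than half of $x_{n-1}$ is cancelled from the right even when $\pdt{d}_n$ is short or empty. That situation is outside the literal scope of Lemma \ref{3.3}. I would first try to reduce to Lemma \ref{3.3} by treating the cancelling letters of $x_n$ as if they were $S$-letters. If that fails, I would fall back on the explicit $\theta$ and $\varphi_b$ counting above, using $|w_n|+\nu_n\le 5^{k_n}$ and $5^{2k}\gg 5^k$.
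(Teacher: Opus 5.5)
Your non-crossing framework and the case $n=1$ are fine, but for $n\ge 2$ the argument has a genuine gap at the barrier step, in two places.

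First, you argue the estimate $|y|<\tfrac12|x_{n-1}|$ without using the hypothesis that an $a$ of $x_n$ is matched far to the left. As a general statement about super $X$-minimal expressions it is false. Take $n=2$, with $\pdt{d}_1,\pdt{d}_2$ empty, $x_1=x(1,2)=b^{125}a^{625}b^{625}$ and $x_2=x(x_1^{-1}c,5)$. This is a legitimate generator, since $|x_1^{-1}c|=1376\le 5^5$. Because $\varphi_a(x_1x_2)=5^4+5^{10}$ is not of the form $5^{2j}$, we have $x_1x_2\notin X$. So this two-symbol expression is $X$-minimal, and trivially super $X$-minimal. Yet every letter of $x_1$ is cancelled by $x_2$. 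The flaw in your count is that you compare the supply of negative letters in $w_n^{\pm1}$ (up to $2\cdot 5^{k_n}$) with $\tfrac12|x_{n-1}|\approx 5^{2k_{n-1}}$, but $k_n$ may be far larger than $k_{n-1}$. The heuristic ``$5^{2k}\gg 5^k$'' only compares quantities with the same index. The mixed case, where $\pdt{d}_n$ and $x_n$ each supply about half of the cancelling letters, also escapes both of your alternatives, since Lemma \ref{3.3} needs at least half of $x_{n-1}$ to be cancelled by $S$-letters.

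Second, even granting $|y|<\tfrac12|x_{n-1}|$, it does not follow that nothing to the left of $x_{n-1}$ is matched with $x_n$. A letter of $x_{n-1}$ outside $y$ need not be uncancelled: it may be matched to the left, with a letter of $\pdt{d}_{n-1}$, $x_{n-2}$, and so on. Such a pair can sit nested inside an outer pair $(p,q)$, with $q$ in the $a$-block of $x_n$ and $p$ in $\pdt{d}_m$ or $x_m$ for some $m<n-1$. Non-crossing forbids nothing here, so $x_{n-1}$ can be cancelled entirely and is no barrier. This configuration is excluded only by $X$-minimality, through a replacement argument that your proposal is missing. The paper proceeds as follows:
\begin{itemize}
\item Take the contiguous segment, ending with $w_n^{-1}$, that reduces to $a^{-r}$.
\item Apply the homomorphisms $\varphi_a$ and $\varphi_b$ to it. Then $w_n$ and $w_n^{-1}$ cancel exactly, and every intervening $x_i$ contributes positively.
\item Use Lemma \ref{3.0} to bound the total length of the intervening $\pdt{d}_i$ from below.
\item Conclude that the tail can be rewritten with fewer symbols as $(a^{-1})^{r+\nu_n}(b^{-1})^{5^{2k_n-1}}x(1,k_n)$, preceded by $z$ or by $x_my^{-1}$ when the segment starts inside $x_m=zy$. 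Lemma \ref{3.6} is what makes this work when $|y|>\tfrac12|x_m|$.
\end{itemize}
Only when the segment starts in a short, purely positive suffix of $x_{n-1}$ does this count give no saving. There one argues directly that $\pdt{d}_n$ must cancel that suffix and then $b^{5^{2k_n-1}}$. Super $X$-minimality then gives $w_n=1$ and $\nu_n=0$, and the cancelling $a^{-1}$'s come from $\pdt{d}_n$.
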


	\begin{proof}	
		Suppose that $r$ of the letters from $a^{5^{2k_n} - \nu_n}$ are cancelled. Then there must be some contiguous block of letters at the end of the string
		\begin{equation}		\label{eq3.14}
			\pdt{d}_1x_1 \pdt{d}_2 \cdots x_{n-1} \pdt{d}_n \, a^{\nu_n} w_n b^{5^{2k_n-1}} w_n^{-1}
		\end{equation}
		that reduces to $a^{-r}$. Our proof shall proceed by dividing into cases according to the various possibilities for where this block can lie. Only Case 1 and Subcase 4C at the very end do not result in a contradiction.
		
		\vskip 2mm
		\noindent
		\underline{Case 1.} The block comes entirely from $\pdt{d}_n a^{\nu_n} w_n b^{5^{2k_n-1}} w_n^{-1}$.
		
		In this case, we see that $\pdt{d}_n$ must cancel $b^{5^{2k_n-1}}$, and hence (as \eqref{eq3.15} is super $X$-minimal) $w_n=1$ and $\nu_n = 0$. It is then clear that $\pdt{d}_n = y a^{-r}b^{-5^{2k_n-1}}$ (reduced), for some $y \in \F_3$, and  in particular that the cancellations all come from the $a^{-1}$'s in $\pdt{d}_n$.
		
		\vskip 2mm
		\noindent
		\underline{Case 2.} The block ends somewhere within $\pdt{d}_m$, for some $m<n$. 
		
		
		
		In this case, divide the sequence $d_m$ into into an initial part $d_m'$, and the rest $d_m''$, so that $\pdt{d}_m''$ is the part of $\pdt{d}_m$ that belongs to the block. We have
		$$ a^{-r} = \pdt{d}_m''x_m \pdt{d}_{m+1} \cdots x_{n-1} \pdt{d}_n \, a^{\nu_n} w_n b^{5^{2k_n-1}} w_n^{-1}.$$
		Taking $\varphi_a$ of both sides, and noting that $\varphi_a(x_i) > 1$ for all $i$, we have
		\begin{align*}
			-r &= \varphi_a(\pdt{d}_m'')  +\sum_{i = m+1}^n \varphi_a(\pdt{d}_i) 
			+\sum_{i=m}^{n-1} \varphi_a(x_i) + \nu_n\\
			&> \varphi_a(\pdt{d}_m'')  +\sum_{i = m+1}^n \varphi_a(\pdt{d}_i) + (n-m) +\nu_n,
		\end{align*}
		which implies that
		$$ -\varphi_a(\pdt{d}_m'')  - \sum_{i = m+1}^n \varphi_a(\pdt{d}_i) > r +(n-m) +\nu_n.$$
		Similarly
		$$ -\varphi_b(\pdt{d}_m'')  -\sum_{i = m+1}^n \varphi_b(\pdt{d}_i) > (n-m) + 5^{2k_n-1}.$$
		By Lemma \ref{3.0} these last two equations together imply that  
		\begin{align*}
			|\pdt{d}_m''| + \sum_{i=m+1}^n |\pdt{d}_i | &\geq
			|\varphi_a(\pdt{d}_m'')| + |\varphi_b(\pdt{d}_m'')| + \sum_{i=m+1}^n |\varphi_a(\pdt{d}_i)| +  \sum_{i=m+1}^n |\varphi_b(\pdt{d}_i)|\\
			&\geq r+ \nu_n  + 5^{2k_n-1} + 2(n-m)> r + \nu_n +  5^{2k_n-1} + 1.
		\end{align*}
		The $X$-word
		$$\pdt{d}_m''x_m \pdt{d}_{m+1} \cdots \pdt{d}_n x_n$$
		is equal to $a^{5^{2k_n}-r - \nu_n}b^{5^{2k_n}}$ as a reduced word, and contains 
		$$(n-m) + |\pdt{d}_m''| + \sum_{i=m+1}^n |\pdt{d}_i | \geq 2 + |\pdt{d}_m''| + \sum_{i=m+1}^n |\pdt{d}_i | > r + \nu_n + 5^{2k_n-1} + 3$$
		many symbols from $X$,
		and hence replacing this string in \eqref{eq3.15} by
		$$(a^{-1})^{r +\nu_n} (b^{-1})^{5^{2k_n-1}} x(1,k_n)$$
		would allow us to represent the same element of $\F_3$ whilst using fewer elements of $X$, a contradiction.
		
		\vskip 2mm
		\noindent
		\underline{Case 3.} The block ends either between $x_{m-1}$ and $\pdt{d}_m$, or between $\pdt{d}_m$ and $x_m$, for some $m<n$.
		
		In this case, repeat the argument of Case 2., with $\pdt{d}_m'' = \pdt{d}_m$ and $\pdt{d}_m' = 1$, or $\pdt{d}_m'' = 1$ and $\pdt{d}_m' = \pdt{d}_m$, respectively.
		
		\vskip 2mm
		\noindent
		\underline{Case 4.} The block ends part way through $x_m$, for some $m<n$.
		
		Write 
		$$x_m = z_1z_2 \cdots z_R \, y_1 y_2 \cdots y_Q \ \text{(reduced)},$$
		for some $Q, R \in \N$ such that $y_1 y_2 \cdots y_Q$ is part of the block, and $z_1 z_2 \cdots z_R$ isn't. 
		The expression $x_m \pdt{d}_{m+1} \cdots \pdt{d}_n x_n$ has $n-m + 1 + \sum_{i=m+1}^n |\pdt{d}_i|$ symbols from $X$ and as an element of $\F_3$ is equal to $\pdt{z} a^{5^{2k_n}- r- \nu_n}b^{5^{2k_n}}$.
		
		We have
		$$\pdt{y} \, \pdt{d}_{m+1} x_{m+1} \cdots \pdt{d}_n  \, w_n b^{5^{2k_n-1}} w_n^{-1} = a^{-r - \nu_n}.$$
		Applying $\varphi_a$ to both sides gives
		$$\varphi_a(\pdt{y})  + \sum_{i=m+1}^n \varphi_a(\pdt{d}_i) + \sum_{i=m+1}^{n-1} \varphi_a(x_i)  = -r - \nu_n,$$
		and hence
		\begin{equation}		\label{eq3.16}
			\sum_{i=m+1}^n \varphi_a(\pdt{d}_i) = -r -\nu_n -\varphi_a(\pdt{y}) -\sum_{i=m+1}^{n-1} \varphi_a(x_i) \leq -r -\nu_n - \varphi_a(\pdt{y}). 
		\end{equation} 
		Similarly 
		\begin{equation}		\label{eq3.17}
			\sum_{i=m+1}^n \varphi_b(\pdt{d}_i) \leq -5^{2k_n-1} - \varphi_b(\pdt{y}).
		\end{equation}
		Using Lemma \ref{3.0} and noting that $\varphi_a(\pdt{y}), \varphi_b(\pdt{y}) \geq 0$, Equations \eqref{eq3.16} and \eqref{eq3.17} together imply that 
		\begin{equation}		\label{eq3.13}
			\sum_{i=m+1}^n |\pdt{d}_i| \geq \varphi_a(\pdt{y}) + \varphi_b(\pdt{y})  +r +\nu_n + 5^{2k_n-1} = \theta(\pdt{y})  +r +\nu_n + 5^{2k_n-1}.
		\end{equation}
		We now divide into subcases depending on whether or not $|\pdt{y}|\leq \frac{1}{2}|x_m|$.

		\vskip 2mm
		\noindent
		\underline{Subcase 4A.} $|\pdt{y}| > \frac{1}{2}|x_m|$.
		
		In this case Lemma \ref{3.6} tells us that $\theta(\pdt{y}) \geq |\pdt{z}|$, and so \eqref{eq3.13} implies that 
		$$\sum_{i=m+1}^n |\pdt{d}_i| \geq |\pdt{z}| + 5^{2k_n-1} + r +\nu_n = R  + r +\nu_n + 5^{2k_n-1} ,$$
		so that the expression $x_m \pdt{d}_{m+1} \cdots \pdt{d}_n x_n$ has at least 
		$$2 + \sum_{i=m+1}^n |\pdt{d}_i| \geq 2+ R + r +\nu_n + 5^{2k_n-1}$$
		symbols from $X$. 	
		On the other hand, the expression
		$$z_1 z_2 \cdots z_R \, (a^{-1})^{r+\nu_n} (b^{-1})^{5^{2k_n-1}} x(1, k_n)$$
		also equals $\pdt{z} a^{5^{2k_n}-r - \nu_n}b^{5^{2k_n}}$, but has only $1+ R + r +\nu_n + 5^{2k_n-1}$ symbols from $X$, a contradiction.
			
		\vskip 2mm
		\noindent
		\underline{Subcase 4B.} $|\pdt{y}| \leq \frac{1}{2}|x_m|$  and $m<n-1$.
		
		This means that $\pdt{y} | a^{5^{2k_m} - \nu_m} b^{5^{2k_m}},$ so that $\theta(\pdt{y}) = |\pdt{y}|$. As such, by \eqref{eq3.13},
		$$ \sum_{i=m+1}^n |\pdt{d}_i| \geq |\pdt{y}| + r + \nu_n + 5^{2k_n-1} = Q + r + \nu_n + 5^{2k_n-1}  ,$$
		and it follows that the expression $x_m \pdt{d}_{m-1} \cdots \pdt{d}_n x_n$ contains at least 
		$$n-m + 1 + \sum_{i = m-1}^n|\pdt{d}_i| \geq  3 + Q + r + \nu_n + 5^{2k_n-1}$$
		symbols from $X$.
		Therefore, the expression
		$$x_m y_Q^{-1} \cdots y_2^{-1} y_1^{-1}(a^{-1})^{r + \nu_n} (b^{-1})^{5^{2k_n-1}} x(1, k_n)$$
		has fewer symbols from $X$ than $x_m \pdt{d}_{m+1} \cdots \pdt{d}_n x_n$, whilst representing the same group element. This is a contradiction. 
		
		\vskip 2mm
		\noindent
		\underline{Subcase 4C.} $|\pdt{y}| \leq \frac{1}{2}|x_m|$  and $m=n-1$.
		
		This means that our block is $\pdt{y}\pdt{d}_n a^{\nu_n} w_n b^{5^{2k_n-1}} w_n^{-1}$, where again  $\pdt{y} | a^{5^{2k_m} - \nu_m} b^{5^{2k_m}}$. 
		We have 
		$$\pdt{y} \pdt{d}_n a^{\nu_n} w_n b^{5^{2k_n-1}} w_n^{-1} = a^{-r}$$
		and for this product to be equal to $a^{-r}$ something must cancel $\pdt{y}$. If terms from $w$ cancel $\pdt{y}$ then they must cancel $\pdt{d}_n$ first, meaning that there is nothing available to cancel $b^{5^{2k_n-1}}$. 
		Hence the cancellation must come from $\pdt{d}_n$ and we can write $\pdt{d}_n = \pdt{d}_n' \pdt{d}_n''$ (reduced), 
		where $\pdt{d}_n' = y_Q^{-1}\cdots y_2^{-1} y_1^{-1}$. Then  $\pdt{d}_n'' a^{\nu_n} w_n b^{5^{2k_n-1}} w_n^{-1} = a^{-r}$. Now we may argue as in Case 1 to get the desired conclusion.
	\end{proof}
	
	\begin{lemma}		\label{3.4}
		Let $n \in \N$ and suppose that 
		\begin{equation}		\label{eq3.1}
			\pdt{d}_1 x_1 \cdots \pdt{d}_n x_n \pdt{d}_{n+1}
		\end{equation}
		is super $X$-minimal, where $x_1, \ldots, x_n \in X_\infty$, and $d_{ij} \in S$. 
		Further, write 
		$$x_n = a^k w b^{5^{2j-1}} w^{-1} a^{5^{2j}-k}b^{5^{2j}} \ \text{(reduced)},$$
		 as in Corollary \ref{3.1a}, for some $j,k \in \N$, and $w \in \F_3$, with $k + |w| \leq 5^{j}$, and write let $p$ denote the number of letters in $\pdt{d}_{n+1}$. 
		  Fix a sequence of cancellations that reduces \eqref{eq3.1}. Then not all of the terms from $a^{5^{2j}}$ are cancelled, and hence the product \eqref{eq3.1} ends $a^{m_1} b^{m_2} d_{n+1, p'} \cdots d_{n+1, p}$ as a reduced word, where $1 \leq p' \leq p$, $m_1>0$, and $m_2 \geq 0$.
	\end{lemma}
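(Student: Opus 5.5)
The plan is to argue by contradiction, using Lemma \ref{3.7} to control cancellation of the final $a$-block of $x_n$ from the left and Lemma \ref{3.3} to rule out excessive total cancellation. Throughout I interpret ``the terms from $a^{5^{2j}}$'' as the $a$-letters of the final block $a^{5^{2j}-k}$ of $x_n$. (The leading $a^k$ can only survive if this final block is not entirely cancelled anyway.)

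First I note two structural facts. Any contiguous sub-expression of an $X$-minimal expression is $X$-minimal, and super $X$-minimality is inherited by such sub-expressions. In particular $\pdt{d}_1x_1\cdots\pdt{d}_nx_n$ and $\pdt{d}_nx_n\pdt{d}_{n+1}$ are super $X$-minimal. Moreover, in free reduction the pairing of cancelling letters does not depend on the order of cancellations. Hence I may apply Lemma \ref{3.7} to the prefix ending in $x_n$ and keep the same cancellation data.

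Now suppose that every letter of $a^{5^{2j}-k}$ is cancelled, say $r$ of them from the left and $s$ from the right, with $r+s = 5^{2j}-k$.
\begin{itemize}
\item A letter of this block can only be cancelled from the right after $b^{5^{2j}}$ has been cancelled entirely. That can only be done by $\pdt{d}_{n+1}$, since nothing lies to its right.
\item By Lemma \ref{3.7}, the $r$ letters cancelled from the left are cancelled by $\pdt{d}_n$. If $r>0$, the argument of Case 1 there applies: $\pdt{d}_n$ must first cancel $b^{5^{2j-1}}$, so super $X$-minimality forces $w=1$ and $k=0$, and $\pdt{d}_n$ ends in $a^{-r}b^{-5^{2j-1}}$.
\end{itemize}

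I then count the letters of $x_n$ that are cancelled by $\pdt{d}_n$ and $\pdt{d}_{n+1}$, and compare with $|x_n| = 2\cdot 5^{2j}+5^{2j-1}+2|w|$.
\begin{itemize}
\item If $r>0$, the cancelled letters include $b^{5^{2j-1}}$, all of $a^{5^{2j}}$ and all of $b^{5^{2j}}$. That is $2\cdot5^{2j}+5^{2j-1} = |x_n|$ letters, so everything is cancelled.
\item If $r=0$, at least $5^{2j}+(5^{2j}-k) \ge 2\cdot 5^{2j}-5^j$ letters are cancelled. This exceeds $\tfrac12(2\cdot5^{2j}+5^{2j-1}+2\cdot 5^j)$ for $j\ge2$.
\end{itemize}
In either case more than half of $x_n$ is cancelled by $\pdt{d}_n$ and $\pdt{d}_{n+1}$, contradicting Lemma \ref{3.3} applied to $\pdt{d}_nx_n\pdt{d}_{n+1}$. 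Hence at least one letter of the final $a$-block of $x_n$ survives.

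For the second assertion, let $a^{m_1}$ with $m_1>0$ be the surviving portion of that $a$-block, and $b^{m_2}$ the surviving portion of $b^{5^{2j}}$. The surviving letters of $\pdt{d}_{n+1}$ form a terminal segment $d_{n+1,p'}\cdots d_{n+1,p}$ of $\pdt{d}_{n+1}$. Since everything to the left of the surviving $a$ is unaffected by $\pdt{d}_{n+1}$, the reduced word ends $a^{m_1}b^{m_2}d_{n+1,p'}\cdots d_{n+1,p}$. This is the required form; if no letter of $\pdt{d}_{n+1}$ survives, that terminal segment is empty.

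I expect the main obstacle to be justifying the use of Lemma \ref{3.7} inside the longer expression. One has to check carefully that left-cancellation of the $a$-block is determined by the prefix alone, and that the ``Case 1'' conclusion ($w=1$, $k=0$, and $b^{5^{2j-1}}$ cancelled) really holds. The counting against Lemma \ref{3.3} afterwards is routine.
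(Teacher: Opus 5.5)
Your argument is essentially the paper's: Lemma \ref{3.7} confines left-cancellation of $a^{5^{2j}-k}$ to $\pdt{d}_n$. Lemma \ref{3.3} for $\pdt{d}_nx_n\pdt{d}_{n+1}$ then forbids the whole block from disappearing, whether it is eaten from the right (after $\pdt{d}_{n+1}$ removes $b^{5^{2j}}$) or from the left (after $\pdt{d}_n$ removes $a^kwb^{5^{2j-1}}w^{-1}$). The paper counts survivors directly in these two cases rather than arguing by contradiction, and never needs $w=1$, $k=0$.

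Two details need fixing, though neither is fatal.
\begin{enumerate}
\item Free-reduction pairings \emph{do} depend on the order of cancellation: in $a\,a^{-1}a$ the middle letter may cancel either neighbour. Justify the transfer to the prefix another way. The pairs of your fixed reduction that lie inside $\pdt{d}_1x_1\cdots\pdt{d}_nx_n$ have their interiors inside it, so they can be carried out on that prefix alone and then completed to a full reduction of it. The left-cancellations of the $a$-block are among these pairs, so Lemma \ref{3.7} applies to them. Alternatively, observe, as the paper tacitly does, that the proof of Lemma \ref{3.7} runs verbatim with $\pdt{d}_{n+1}$ appended.
\item When $r>0$ but $s=0$, the block $b^{5^{2j}}$ need not be cancelled, so ``everything is cancelled'' is false. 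However, the $5^{2j-1}+5^{2j}$ letters that are cancelled still exceed $\tfrac12|x_n|=5^{2j}+\tfrac12\cdot 5^{2j-1}$, so the contradiction with Lemma \ref{3.3} survives.
\end{enumerate}
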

	
	\begin{proof}
		By Lemma \ref{3.7} terms from $a^{5^{2j}-k}$ are only cancelled by terms from $\pdt{d}_n$ and $\pdt{d}_{n+1}$.		
		In order for such cancellations to occur, either $b^{5^{2j}}$ must be cancelled from the right, or all of $a^kwb^{5^{2j-1}}w^{-1}$ must be cancelled from the left.
		
		Suppose that $b^{5^{2j}}$ is cancelled from the right. By Lemma \ref{3.3} $\pdt{d}_n$ and $\pdt{d}_{n+1}$ together cannot cancel more than 
		$\frac{1}{2}|x(v,j)| = |w| + \tfrac{1}{2} \cdot 5^{2j-1} +  5^{2j}$
		letters from $x(v,j)$. If $\pdt{d}_{n+1}$ cancels $b^{5^{2j}}$ that means that there can be at most 
		$\frac{1}{2}|x(v,j)| - 5^{2j} = |w| + \frac{1}{2} \cdot 5^{2j-1}$
		further cancellations by $\pdt{d}_n$ and $\pdt{d}_{n+1}$. This means that at least 
		$$5^{2j} - k - |w| - \tfrac{1}{2} \cdot 5^{2j-1} \geq 5^{2j} - 5^{j} - \tfrac{1}{2}\cdot5^{2j-1} >0$$
		letters from $a^{5^{2j} - k}$ remain.
		
		We can do a similar calculation if instead  $a^kwb^{5^{2j-1}}w^{-1}$ is cancelled from the left. In this case $\pdt{d}_n$ must cancel $k+ 2|w| + 5^{2j-1}$ letters, leaving at most
		\begin{align*}
			\tfrac{1}{2}|x(v,j)| - (k+2|w| + 5^{2j-1}) &=  (|w| + \tfrac{1}{2} \cdot 5^{2j-1} +  5^{2j}) - (k+ 2|w| + 5^{2j-1}) \\
			&= 5^{2j} - k - |w| - \tfrac{1}{2} \cdot 5^{2j-1}
		\end{align*}
		further cancellations available. Therefore, at least
		$$5^{2j} - k  -(5^{2j} - k - |w| - \tfrac{1}{2} \cdot 5^{2j-1}) = \tfrac{1}{2} \cdot 5^{2j-1} + |w| > 0$$
		letters from $a^{5^{2j}-k}$ remain. This completes the proof.
	\end{proof}

	\begin{lemma}	\label{3.5a}
		Let $n \in \N$, with $n \geq 2$. Then $|a^{5^{2n}} b^{5^{2n}}|_X \leq  5^{2n-1} +1$.
	\end{lemma}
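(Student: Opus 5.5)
The plan is to exhibit an explicit expression over $X$ with $5^{2n-1}+1$ symbols that represents $a^{5^{2n}}b^{5^{2n}}$. As noted in Section 3, the natural candidate is
$$(b^{-1})^{5^{2n-1}}\, x(1,n).$$
Here $(b^{-1})^{5^{2n-1}}$ is a product of $5^{2n-1}$ copies of $b^{-1}\in S\subseteq X$, so together with the single generator $x(1,n)$ the expression uses $5^{2n-1}+1$ elements of $X$.

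First I will check that $x(1,n)$ really belongs to $X$. Take $v=1$, so $|v|=0\le 5^n$ and hence $x(1,n)\in X_n$. Since $n\geq 2$, $X_n\subseteq X_\infty\subseteq X$; this is exactly where the hypothesis $n\ge 2$ enters, because $X_\infty$ is the union over $j\ge 2$ only.

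Next I will compute the product in $\F_3$. By definition $x(1,n)=b^{5^{2n-1}}a^{5^{2n}}b^{5^{2n}}$, so
$$(b^{-1})^{5^{2n-1}}x(1,n)=b^{-5^{2n-1}}b^{5^{2n-1}}a^{5^{2n}}b^{5^{2n}}=a^{5^{2n}}b^{5^{2n}}.$$
Hence $|a^{5^{2n}}b^{5^{2n}}|_X\le 5^{2n-1}+1$ by the definition of $|\cdot|_X$ as a minimum.

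There is no real obstacle here. The only points needing care are confirming membership of $x(1,n)$ in $X_\infty$, which requires $n\ge2$, and counting the symbols correctly. The substantive work is the reverse inequality, which is handled separately.
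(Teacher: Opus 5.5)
Your proposal is correct and is essentially the paper's own one-line proof: it writes $a^{5^{2n}}b^{5^{2n}} = (b^{-1})^{5^{2n-1}}x(1,n)$ and counts $5^{2n-1}+1$ symbols from $X$. You also correctly note that $n \geq 2$ is what puts $x(1,n)$ in $X_\infty$, since $X_\infty$ only contains the $X_j$ with $j \geq 2$.
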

	
	\begin{proof}
		Just note that 
		$a^{5^{2n}}b^{5^{2n}} = (b^{-1})^{5^{2n-1}} x(1, n).$
	\end{proof}
	
	Part (i) of the next proposition with $k=0$ proves \eqref{eqstar}, one of the two vital word length evaluations that we shall require for our main result. Part (ii) serves as the base case for the second in Proposition \ref{3.8}.
	
	\begin{proposition}		\label{3.5}
		Let $k, k_1, k_2, n \in \N_0$ with $n \geq 2$ and $k_2 \geq 1$. Then 
		\begin{enumerate}
			\item[ \rm (i)] $|c^k a^{5^{2n}} b^{5^{2n}}|_X = k + 5^{2n-1} +1$;
			\item[ \rm (ii)] $|c^{k_1} a^{5^{2n}} b^{5^{2n}}c^{k_2}|_X =  k_1+ 5^{2n-1} +1 + k_2$.
		\end{enumerate}
	\end{proposition}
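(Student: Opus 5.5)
The upper bounds are immediate from Lemma \ref{3.5a}: writing $c^k (b^{-1})^{5^{2n-1}} x(1,n)$, and appending $c^{k_2}$ for (ii), gives expressions with the required numbers of symbols. So everything rests on the lower bounds. I would take an $X$-minimal expression for the element $g$ in question, replace it by an equivalent super $X$-minimal expression (possible by Lemma \ref{3.1} and Lemma \ref{3.2}), and write it as
$$\pdt{d}_1 x_1 \pdt{d}_2 \cdots \pdt{d}_N x_N \pdt{d}_{N+1}, \qquad x_i \in X_\infty .$$
The case $N=0$ is trivial, since then the count is $|g|$, which is far larger than claimed. So assume $N \geq 1$.

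\textbf{Step 1: $N = 1$.} Apply Lemma \ref{3.4}: the reduced form of $g$ ends in $a^{m_1}b^{m_2}$ followed by a tail of $\pdt{d}_{N+1}$. This is consistent with $g$ ending in $a^{5^{2n}}b^{5^{2n}}$ (or $\ldots b^{5^{2n}}c^{k_2}$). I would then count letters using the homomorphisms. Each $x_i = x(v_i,j_i)$ has $\varphi_c(x_i)=0$, $\varphi_a(x_i)=5^{2j_i}$ and $\varphi_b(x_i) = 5^{2j_i}+5^{2j_i-1}$. Set $D = \sum_i |\pdt{d}_i|$. Comparing $\varphi_a,\varphi_b,\varphi_c$ of both sides and using Lemma \ref{3.0},
$$D \geq |5^{2n} - \textstyle\sum_i 5^{2j_i}| + |5^{2n} - \sum_i (5^{2j_i}+5^{2j_i-1})| + k + k_2 .$$
For $N=1$ this gives $D \geq |5^{2n}-5^{2j}| + |5^{2n}-5^{2j}-5^{2j-1}| + k + k_2$. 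When $j=n$ this equals $5^{2n-1}+k+k_2$, as needed. When $j \neq n$ the bound is much larger than $5^{2n-1}$, because $5^{2n}-5^{2j}$ is then at least of order $5^{2n-1}$ in absolute value and the second term adds more. This part is routine arithmetic with base-$5$ sizes.

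\textbf{Step 2: $N \geq 2$ (the main obstacle).} Here the homomorphism count alone fails. For example, $\sum_i 5^{2j_i}$ could be close to $5^{2n}$, and the $\varphi_b$-deficit could then be small, so several generators might appear to combine cheaply. I would exploit the structure given by Lemmas \ref{3.7} and \ref{3.4}.
\begin{enumerate}
\item By Lemma \ref{3.4}, $x_N$ contributes a surviving block $a^{m_1}$, $m_1>0$, near the end of $g$. Since $g$ contains exactly one $a$-block, everything before that block, namely $\pdt{d}_1x_1\cdots\pdt{d}_N a^{\nu_N}w_Nb^{5^{2j_N-1}}w_N^{-1}$ together with the cancelled $a$'s, must reduce to $c^k$ times a power of $a$.
\item Lemma \ref{3.7} says the $a$'s of $x_N$ are cancelled only by $\pdt{d}_N$. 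So the prefix $P=\pdt{d}_1x_1\cdots x_{N-1}$ reduces to something whose reduced form times $\pdt{d}_N(\ldots)$ collapses to $c^k a^{*}$.
\item I would then show $P$ could be replaced by a pure $S$-word of length at most $|P|_X$ (indeed by $c^k$ times powers of $a,b$). This works by bounding $\varphi_a(P)$ and $\varphi_b(P)$ via Lemma \ref{3.0}, as in Case 2 of Lemma \ref{3.7}: the $\varphi_b$-excess $5^{2j_i-1}$ of each $x_i$ must be paid back by $b^{-1}$ letters in the $\pdt{d}$'s.
\end{enumerate}
This reduces to the $N=1$ count, possibly after an induction on $N$. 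I expect this step to require care, and I would try first to derive a contradiction with minimality in the style of Case 2 of Lemma \ref{3.7}: replace the segment from $x_{N-1}$ onward by a shorter expression. The point is that each extra generator forces at least $5^{2j_i-1}$ compensating letters, exceeding its saving.

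**Step 3: part (ii).** The same argument carries the trailing $c^{k_2}$ through. Since $\varphi_c(x_i)=0$, the $c$-letters are always counted separately via $|\varphi_c|$ in Lemma \ref{3.0}. Moreover, Lemma \ref{3.4} shows that the trailing $c$'s come from $\pdt{d}_{N+1}$, so no extra interaction arises.
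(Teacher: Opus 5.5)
Your upper bounds, the reduction to super $X$-minimal form, and the case $N=1$ are fine. Your direct $\varphi_a,\varphi_b,\varphi_c$ count for $N=1$ is correct and needs no case analysis. The gap is Step 2. This is where all the difficulty lies, you only sketch it, and the sketch would not work as written.

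The element $P=\pdt{d}_1x_1\cdots x_{N-1}$ is not of the form ``$c^k$ times powers of $a,b$''. By your own item 1 it equals $c^ka^{s}v_Nb^{-5^{2j_N-1}}v_N^{-1}\pdt{d}_N^{-1}$ for some $s\in\Z$, where $x_N=x(v_N,j_N)$. So it cannot be swapped for such a word. Bounding $\varphi_a(P),\varphi_b(P)$ via Lemma \ref{3.0} only bounds the symbol count of $P$ from below by homomorphism data. That cannot dominate $|P|$, because $|P|$ also counts $v_N$, $v_N^{-1}$ and letters of $\pdt{d}_N$ that $\varphi_a,\varphi_b,\varphi_c$ do not see. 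Replacing ``the segment from $x_{N-1}$ onward'' has the same defect: its value depends on the uncontrolled $\pdt{d}_1x_1\cdots\pdt{d}_{N-1}$. Finally, the slogan that each extra generator forces $5^{2j_i-1}$ compensating letters is exactly the global count you showed fails. It is valid only on a segment whose value is already known.

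The missing step is to localize at the right place, namely just before $x_N$. Item 1 gives $y:=\pdt{d}_1x_1\cdots\pdt{d}_N=c^ka^{s}v_Nb^{-5^{2j_N-1}}v_N^{-1}$. Hence $yx_N=c^ka^{s}(b^{-1})^{5^{2j_N-1}}x(1,j_N)$, a word with $k+|s|+5^{2j_N-1}+1$ symbols. Summing Lemma \ref{3.0} over $\pdt{d}_1,\dots,\pdt{d}_N$, and writing $A=\sum_{i<N}5^{2j_i}$, gives
\[ \sum_{i\le N}|\pdt{d}_i|\ \ge\ |s-A|+\tfrac65A+5^{2j_N-1}+k\ \ge\ |s|+\tfrac15A+5^{2j_N-1}+k. \]
So replacing $\pdt{d}_1x_1\cdots\pdt{d}_Nx_N$ by that word saves at least $N-1+A/5>0$ symbols when $N\ge2$. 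Thus minimal expressions have $N=1$, and your Step 1 finishes (with $k_1$ in place of $k$ for (ii)).

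The paper argues differently. It uses Lemma \ref{3.4} and matches the tail $a^{m_1}b^{m_2}\cdots$ against $a^{5^{2n}}b^{5^{2n}}$. A short case analysis using the upper bound then shows the last $X_\infty$-generator is $x(v,n)$ with nothing after it, so everything before it equals $c^kvb^{-5^{2n-1}}v^{-1}$. The single homomorphism $\psi=\varphi_b-\varphi_c$ is $\ge -1$ on every element of $X$. It therefore forces at least $k+5^{2n-1}$ symbols in that prefix however many $X_\infty$-generators occur, with no reduction in $N$. Part (ii) is then reduced to (i) by stripping off the trailing $c^{k_2}$.
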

	
	\begin{proof}
		We first prove (i). Clearly an $X$-minimal expression representing $c^k a^{5^{2n}} b^{5^{2n}}$ must involve at least one element of $X_\infty$.
		Therefore, suppose that we have
		$d_1, \ldots, d_l \in S$, $x_1, \ldots ,x_q \in X$, and $j \in \N$ and $ v \in \F_3$ with $|v| \leq 5^j$, such that 
		\begin{equation} 	\label{eq3.4b}
			x_1 \cdots x_q \, x(v,j) \, \pdt{d} = c^k a^{5^{2n}} b^{5^{2n}},
		\end{equation}
		and such that $q+l+1$ is minimal. Then by Lemma \ref{3.5}
		\begin{equation}		\label{eq3.4a}
			q+l+1 \leq k+ 5^{2n-1} +1,
		\end{equation}
		and we shall show the reverse inequality in this proof.
		Without loss of generality we may assume that the expression on the left-hand side of \eqref{eq3.4b} is super $X$-minimal.
		
		By Lemma \ref{3.4}, as a reduced word the left-hand side of \eqref{eq3.4b} ends $a^{m_1}b^{m_2}d_{l'+1}\cdots d_l$, where $m_1> 0$, $m_2 \geq 0$, and $d_1, \ldots , d_{l'}$ are the terms from $\pdt{d}$ that cancel with $a^{5^{2j}}b^{5^{2j}}$. We shall consider the possibilities for $d_1, \ldots, d_{l}$. Note that comparing the left- and right-hand side of \eqref{eq3.4b}, $a^{m_1}$ must be part of $a^{5^{2n}}$. As such $d_1, \ldots, d_l \in \{ a^{\pm 1}, b^{\pm 1} \}$.
		
		Observe that $\varphi_c(x) = \varphi_c(d_i) = 0$ for all $x \in X_\infty$ and $i = 1, \ldots, l$. We have  
		\begin{equation} 	\label{eq3.4d}
			k = \varphi_c(x_1 \cdots x_q \, x(v,j) \, \pdt{d}) = \sum_{i=1}^q \varphi_c(x_i),
		\end{equation} 
		which means that at least $k$ of the symbols $x_1, \ldots, x_q$ are equal to $c$.
		
		We claim that $m_2$ cannot be zero. Indeed, this would means that $d_1, d_2, \ldots, d_{5^{2j}} = b^{-1}$, and then comparing with the right-hand side of \eqref{eq3.4b} we would have 
		$d_{l''} = d_{l''+1} = \cdots =d_{l} = b$, where $l-l'' = 5^{2n}$. In total, this brings us to at least $5^{2n}$ $b$'s plus $k$ $c$'s, meaning that $q+l \geq k+5^{2n} > k + 5^{2n-1}$.
		This contradicts \eqref{eq3.4a}.
		As such, $m_2 \neq 0$ as claimed and we are left with three possibilities. We shall show that it is the third one that occurs.
		
		\vskip 2mm
		\noindent
		\underline{Case 1.} $d_1 = d_2 = \cdots = d_{l} = b^{-1}$, and $l=l'< 5^{2j}$.
		
		In this case, the left-hand side of \eqref{eq3.4b} ends $a^{m_1} b^{5^{2j}-l}$, and comparing with the right-hand side we see that $5^{2j}-l = 5^{2n}$, and we must have $j>n$. Then 
		$$l = 5^{2j}-5^{2n} \geq 5^{2n+2} - 5^{2n} = 120\cdot 5^{2n-1} > 5^{2n-1}+1.$$
		Also $q \geq k$ by \eqref{eq3.4d}, so that $q+l>k + 5^{2n-1}$, contradicting \eqref{eq3.4a}.
		
		\vskip 2mm
		\noindent
		\underline{Case 2.} $d_1 = d_2 = \cdots = d_l = b$, and $l >0$.
		
		In this case  the left-hand side of \eqref{eq3.4b} ends
		$a^{m_1} b^{5^{2j}+l}$, and this time we must have $5^{2j}+l = 5^{2n}$, with $j<n$. Then 
		$$l = 5^{2n} - 5^{2j} \geq 5^{2n} - 5^{2n-2} = \tfrac{26}{5} \cdot 5^{2n-1} >  5^{2n-1} + 1,$$
		so that, as in Case 1, $q+l > k + 5^{2n-1}$, a contradiction to \eqref{eq3.4a}.
		
		\vskip 2mm
		\noindent
		\underline{Case 3.} $\pdt{d} = 1$ (empty product).
		
		In this case  the left-hand side of \eqref{eq3.4b} ends $a^{m_1} b^{5^{2j}}$, and comparing with the right-hand side, we get $j=n$. Therefore Equation \eqref{eq3.4b} becomes
		$$x_1 x_2 \cdots x_q \, vb^{5^{2n-1}}v^{-1} a^{5^{2n}}b^{5^{2n}} = c^k a^{5^{2n}} b^{5^{2n}},$$
		which implies that 
		\begin{equation}	\label{eq3.4c}
			x_1 x_2 \cdots x_q = 
			c^k vb^{-5^{2n-1}} v^{-1}.
		\end{equation}		
		Let $\psi = \varphi_b - \varphi_c$, which is a homomorphism $\F_3 \to \Z$ that satisfies $\psi(x) \geq -1 \ (x \in X)$. Applying $\psi$ to both sides of \eqref{eq3.4c} we get
		\begin{equation*}	
			-k- 5^{2n-1} = \psi(x_1 x_2 \cdots x_q) 
			=  \sum_{i=1}^q \psi(x_i) \geq -q,
		\end{equation*}
		which implies that $q \geq k+ 5^{2n-1}$. In this scenario $l=0$, so that $q+l+1 \geq k + 5^{2n-1} +1$, which proves (i).
		
		To prove (ii) we reduce to (i).  Suppose that
		$d_1, \ldots, d_l \in S$, $x_1, \ldots x_q \in X$, and $j \in \N$ and $ v \in \F_3$ with $|v| \leq 5^j$, give
		\begin{equation} 	\label{eq3.4f}
			x_1 \cdots x_q \, x(v,j) \, \pdt{d} = c^{k_1} a^{5^{2n}} b^{5^{2n}}c^{k_2},
		\end{equation}
		with $q+l+1$ minimum possible.
		Then, as already observed, by Lemma \ref{3.4} the left-hand side of \eqref{eq3.4f} ends $a^{m_1}b^{m_2} d_{l'+1} \cdots d_l$, for some $1\leq l' \leq r$, and since $m_1>0$ the term $a^{m_1}$ must form part of $a^{5^{2n}}$. As such $c^{k_2}$ can only be represented within $d_{l'+1} \cdots d_l$, forcing $d_l= \cdots = d_{l-k_2+1} = c$. Write $d_i' = d_i \ (i = 1, \ldots, l-k_2)$. Then \eqref{eq3.4f} now becomes 
		$$x_1 \cdots x_q \, x(v,j) \, \pdt{d}' = c^{k_1} a^{5^{2n}} b^{5^{2n}}$$
		and by (i) $q + |\pdt{d}'| + 1 = q + l-k_2 + 1  = k_1 + 5^{2n-1} + 1$, so that $q + l +1 = k_1 + 5^{2n-1} + 1 + k_2.$ This proves (ii).
	\end{proof}
	
	Finally, we prove the second of our two main word length calculations. 
	
	\begin{proposition}		\label{3.8}
		Let $r \in \N$, and let $n_1, \ldots, n_r \geq 2$ and $k_1, \ldots, k_r$ be natural numbers satisfying 
		$$k_{i-1} > 3\cdot 5^{2n_i} \quad (i = 2,3, \ldots, r).$$
		Then
		\begin{equation}	\label{eq3.5}
			\left|a^{5^{2n_1}} b^{5^{2n_1}} c^{k_1} \cdots a^{5^{2n_r}}b^{5^{2n_r}} c^{k_r} \right|_X = \sum _{i = 1}^r k_i + \sum_{i=1}^r (5^{2n_i-1} +1).
		\end{equation}
	\end{proposition}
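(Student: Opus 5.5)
We argue by induction on $r$. Write $g_r$ for the element on the left of \eqref{eq3.5}. The upper bound is immediate. Each block $a^{5^{2n_i}}b^{5^{2n_i}}$ can be written as $(b^{-1})^{5^{2n_i-1}}x(1,n_i)$ (Lemma \ref{3.5a}), and each $c^{k_i}$ as $k_i$ copies of $c$. The content of the statement is therefore the lower bound. The case $r=1$ is Proposition \ref{3.5}(ii) with $k_1=0$ (taking (i) when needed). It is convenient to prove the slightly stronger statement for $c^{k_0}g_r$ with arbitrary $k_0\ge 0$, since that is what the induction will feed back in. For $r=1$ this is again Proposition \ref{3.5}(ii).

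For the inductive step, fix a super $X$-minimal expression $\pdt{d}_1x_1\cdots\pdt{d}_Nx_N\pdt{d}_{N+1}$ for $c^{k_0}g_r$, which we may take by Lemmas \ref{3.1} and \ref{3.2}. By Lemma \ref{3.4}, its reduced form ends $a^{m_1}b^{m_2}$ followed by uncancelled letters of $\pdt{d}_{N+1}$, with $m_1>0$. Comparing with the reduced word $c^{k_0}g_r$, the block $a^{m_1}$ must sit inside $a^{5^{2n_r}}$. Hence $\pdt{d}_{N+1}$ ends in $c^{k_r}$, and as in the proof of Proposition \ref{3.5}(ii) we may strip these letters, reducing to the case $k_r=0$.

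Next we split the expression at the last occurrence of a $c$-letter or at the point where the prefix represents $c^{k_0}g_{r-1}$. A cleaner route is to locate the $X$-letter $x_N$: its $a^{5^{2j}}$-part survives and lies in the final $a^{5^{2n_r}}$. Since $k_{r-1}>3\cdot5^{2n_r}$ while $|x_N|<3\cdot5^{2j}$ and the cancellation is at most half of $x_N$ (Lemma \ref{3.3}), the conjugate part $v b^{5^{2j-1}}v^{-1}$ of $x_N$, together with the cancelling letters, cannot reach back past the block $c^{k_{r-1}}$. Here one must also show $j=n_r$ or handle $j\neq n_r$ as in Cases 1 and 2 of Proposition \ref{3.5}. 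The intended conclusion is a decomposition in which the expression factors as $E_1E_2$: $E_1$ represents $c^{k_0}g_{r-1}c^{-s}$ for some $0\le s\le k_{r-1}$, and $E_2$ represents $c^{s}a^{5^{2n_r}}b^{5^{2n_r}}$, with the split occurring between $X$-letters, or inside some $\pdt{d}_i$, in the $c^{k_{r-1}}$ region. One then bounds the two lengths separately. $E_2$ uses at least $s+5^{2n_r-1}+1$ symbols by Proposition \ref{3.5}(i). $E_1$ uses at least the value given by the induction hypothesis minus $s$. To see this, append $c^s$ and compare, or better, apply the induction hypothesis to $E_1c^{s}$ directly, which costs at most $s$ extra symbols. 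Adding the two bounds gives the formula.

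The main obstacle is justifying the clean split. An $X_\infty$-letter might straddle the $c^{k_{r-1}}$ region, since its conjugator $v$ may contain $c$'s and its length is up to $5^j$. Alternatively, a letter $x_i$ with large $j$ might have most of its $a,b$ content cancelled so that it contributes across two blocks. I would first try a counting argument with the homomorphisms $\varphi_c$ and $\psi=\varphi_b-\varphi_c$, as in Case 3 of Proposition \ref{3.5}, applied to a segment around $c^{k_{r-1}}$. Any $X_\infty$ letter lying wholly inside that segment contributes nothing positive to $\varphi_c$ and costs one symbol. So the $k_{r-1}$ copies of $c$ force at least $k_{r-1}$ single-letter symbols, and the hypothesis $k_{r-1}>3\cdot5^{2n_r}$ makes any straddling configuration strictly more expensive than the canonical expression. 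This contradicts minimality and yields the desired factorisation.
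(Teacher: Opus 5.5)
\textbf{There is a genuine gap.} Your overall frame matches the paper's. You strip $c^{k_r}$ via Lemma~\ref{3.4}, cut the expression inside the $c^{k_{r-1}}$ region, bound the right piece (representing $c^{\nu}a^{5^{2n_r}}b^{5^{2n_r}}$) by Proposition~\ref{3.5}(i), and bound the left piece by the induction hypothesis. The $c^{k_0}$ strengthening is harmless but not needed. The problem is that the one step carrying all the difficulty, the existence of the cut, is not proved, and the mechanism you suggest cannot supply it.

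The claim that the $k_{r-1}$ copies of $c$ ``force at least $k_{r-1}$ single-letter symbols'' in a segment is exactly what straddling breaks. A letter $x(v,\tau)$ has $\varphi_c(x(v,\tau))=0$, yet it can supply up to $2|v|\le 2\cdot 5^{\tau}$ surviving $c$'s from its conjugator. So there need be no run of whole symbols with $\varphi_c=k_{r-1}$. Moreover, the single-letter $c$'s that $\varphi_c$ forces globally might all be spent cancelling $c^{-1}$'s in the $v^{-1}$'s rather than surviving. Neither $\varphi_c$ nor $\psi$ measures what such a configuration costs, and in your sketch the hypothesis $k_{r-1}>3\cdot 5^{2n_r}$ has no identifiable role. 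The $x_N$ route does not close this either: comparing with $|x_N|<3\cdot 5^{2j}$ needs $j\le n_r$, which you have not shown, and it says nothing about the earlier $X_\infty$-letters.

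\textbf{The missing idea is a $\varphi_a$-count.} Fix a reducing sequence of cancellations and prove that some surviving $c$ of $c^{k_{r-1}}$ is a single letter lying in some $\underline{d}_Q$. Cutting immediately before it gives your factorisation, since no cancellation pair crosses a surviving letter.

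To prove this, suppose instead that all these $c$'s come from $X_\infty$-letters $x(v_i,\tau_i)$ ($i=0,\dots,R$), with the first one split as $zy$ where $y$ begins the $c$-block.
\begin{enumerate}
\item The $i$-th letter supplies $\gamma_i\le 2\cdot 5^{\tau_i}$ of the $c$'s, with $\sum_i\gamma_i=k_{r-1}$. It carries $a$-content $5^{2\tau_i}>2\gamma_i$.
\item Together with $|z|\le 2\cdot5^{\tau_0}+5^{2\tau_0-1}$ and $\tau_i\ge 2$, this gives $\sum_i 5^{2\tau_i}>2k_{r-1}+2|z|$.
\item The suffix starting at that first letter represents $zc^{k_{r-1}}a^{5^{2n_r}}b^{5^{2n_r}}$. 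Hence the total $\varphi_a$ of its single letters is at most $|z|+5^{2n_r}-\sum_i5^{2\tau_i}<5^{2n_r}-2k_{r-1}-|z|$.
\item Using $k_{r-1}>3\cdot 5^{2n_r}$, this is less than $-(2\cdot 5^{2n_r}+k_{r-1}+|z|)$. This is where the hypothesis enters.
\item By Lemma~\ref{3.0}, the suffix therefore uses more symbols than writing $zc^{k_{r-1}}a^{5^{2n_r}}b^{5^{2n_r}}$ out in letters of $S$, contradicting minimality.
\end{enumerate}
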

	
	\begin{proof}		
		It follows from Lemma \ref{3.5a} that the left-hand side of \eqref{eq3.5} is at most the right-hand side. We shall prove the reverse inequality.		
		We proceed by induction on $r$. The base case is given by Proposition \ref{3.5}(ii).
		
		Suppose that the equation
		\begin{equation} 	\label{eq3.6}
			\pdt{d}_1 x_1 \pdt{d}_2 \cdots \pdt{d}_q x_q \pdt{e} = a^{5^{2n_1}} b^{5^{2n_1}} c^{k_1} \cdots a^{5^{2n_r}}b^{5^{2n_r}} c^{k_r},
		\end{equation}
		where $d_{i,j} \in S$, $e_1, \ldots , e_l \in S$, and $x_1, \ldots, x_q \in X_\infty$, has the property that the left-hand expression is $X$-minimal. Without loss of generality we may assume that it is also super $X$-minimal. We shall prove that 
		$$q+l + \sum_{i=1}^q |\pdt{d}_i| \geq \sum _{i = 1}^r k_i + \sum_{i=1}^r (5^{2n_i-1} +1).$$
		
		By Lemma \ref{3.4} the product on the left-hand side of \eqref{eq3.6} ends 
		$$a^{m_1}b^{m_2} e_{l'} \cdots e_l$$
		as a reduced word, where 
		$m_1>0$, $m_2 \geq 0$, and $1 \leq l' \leq l$. 
		As such, comparing with the right-hand side of \eqref{eq3.6}, we must have $e_l = e_{l-1} = \cdots = e_{l-k_r+1} = c$. Letting $d_{q+1, i} = e_i \ (i =1 , \ldots, l-k_r)$, and cancelling the common factors of $c^{k_r}$ from both sides of \eqref{eq3.6}, we get
		\begin{equation}		\label{eq3.6a}
			\pdt{d}_1 x_1 \pdt{d}_2 \cdots \pdt{d}_q x_q \pdt{d}_{q+1} = a^{5^{2n_1}} b^{5^{2n_1}} c^{k_1} \cdots c^{k_{r-1}}a^{5^{2n_r}}b^{5^{2n_r}}.
		\end{equation}
		
		We fix for the remainder of the proof a sequence of cancellations that reduces the left-hand side of \eqref{eq3.6a}.  Under our fixed sequence of cancellations, we claim that at least one of the $c$'s from $c^{k_{r-1}}$ on the right-hand side of \eqref{eq3.6a} must have originated within one of the $\pdt{d}_i$'s on the left-hand side. In order to prove this we begin by making a brief analysis of the left-hand side.
		
		Reading from right to left, on the left-hand side of \eqref{eq3.6a} there must be a contiguous block of letters that reduces to $b^{5^{2n_r}}$, followed by a block that reduces to $a^{5^{2n_r}}$, followed by a block that reduces to $c^{k_{r-1}}$. We refer to these as the $b$-block, the $a$-block, and the $c$-block, respectively; they are not unique (as there may be subblocks at the boundary that completely cancel), but we fix a choice for the remainder of the proof. We make our choice such that the $c$-block is as short as possible. Together the three blocks form a single super-block that reduces to $c^{k_{r-1}} a^{5^{2n_r}} b^{5^{2n_r}}.$
		
		Assume towards a contradiction that none of the $c$'s in $c^{k_{r-1}}$ originate within a $\pdt{d}_i$. Then they must all originate within the $x_i$'s. Note that the endpoints of the $c$-block must be $c$'s by the `shortest' assumption, so (reading left to right) the $c$-block begins within say $x_Q$ and ends within  $x_{Q+R}$ (for  some $R \geq 0$).
		Write $x_{Q+i} = x(v_i, \tau_i) \ (i = 0, \ldots, R)$, and split up $x_Q$ as $x_Q = zy$ (reduced), where $y$ is part of the $c$-block, and $z$ is not. It follows that 
		\begin{equation} 	\label{eq3.22}
			x_Q \pdt{d}_{Q+1} \cdots \pdt{d}_q x_q \pdt{d}_{n+1} = z c^{k_{r-1}} a^{5^{2n_r}} b^{5^{2n_r}}.
		\end{equation}
		Moreover, since the $c$-block must begin with a letter contained in either $v_0$ or $v_0^{-1}$, we have $z|v_0 b^{5^{2\tau_0 -1 }} v_0^{-1}$, which implies that
		\begin{equation}	\label{eq3.23}
			|z| \leq 2 \cdot 5^{\tau_0} + 5^{2 \tau_0 - 1}.
		\end{equation}
		
		Let's say that we have $\gamma_i$ letters $c$ within $x_{Q+i} \ ( i = 0, \ldots, R)$ that remain uncancelled after the fixed sequence of cancellations is performed. By our assumption
		\begin{equation}   	\label{eq3.19}
			\gamma_0+\gamma_1 + \cdots + \gamma_R = k_{r-1}.
		\end{equation}
		For each $i = 0, \ldots, R$ we have
		$$\gamma_i \leq 2 |v_i| \leq 2 \cdot 5^{\tau_i},$$
		since $c$'s within an $x_{Q+i}$ must originate from $v_i$ or $v_i^{-1}$.
		It follows that 
		\begin{equation}		\label{eq3.20}
			2 \gamma_i < 5^{2\tau_i} \quad (i = 0, \ldots, R),
		\end{equation}
		and also that
		\begin{equation} 	\label{eq3.21}
			2 \gamma_0 + 2 \cdot (5^{2\tau_0 -1} + 2 \cdot 5^{\tau_0}) < 5^{2\tau_0}.
		\end{equation}
		
		We now apply $\varphi_a$ to both sides of \eqref{eq3.22}. We get 
		$$\varphi_a(x_Q \pdt{d}_{Q+1} \cdots x_n \pdt{d}_{q+1}) = \varphi_a(z) + 5^{2n_r}$$
		and expanding the left-hand side this becomes
		$$\sum_{i= Q+1}^{q+1} \varphi_a(\pdt{d}_i) + 5^{2 \tau_0} + \sum_{i=1}^R 5^{2\tau_i} + \sum_{i=R+1}^q \varphi_a(x_i)
		= \varphi_a(z) + 5^{2n_r},$$
		so that 
		\begin{align*} 
			\sum_{i= Q+1}^{q+1} \varphi_a(\pdt{d}_i) &= \varphi_a(z) + 5^{2n_r} -  5^{2 \tau_0} - \sum_{i=1}^R 5^{2\tau_i} - \sum_{i=R+1}^q \varphi_a(x_i) \\
			&\leq |z| + 5^{2n_r} -  5^{2 \tau_0} - \sum_{i=1}^R 5^{2\tau_i} \\
			&\leq |z| + 5^{2n_r} -  5^{2 \tau_0} -  2 \sum_{i=1}^R \gamma_i \qquad (\text{by \eqref{eq3.20}}) \\
			&\leq |z| + 5^{2n_r} - (2 \gamma_0 + 2 \cdot (5^{2\tau_0 -1} + 2 \cdot 5^{\tau_0} )) - 2 \sum_{i=1}^R \gamma_i \qquad (\text{by  \eqref{eq3.21}})\\
			&= (|z| - 5^{2\tau_0 -1} - 2 \cdot 5^{\tau_0}) +5^{2n_r} - (5^{2\tau_0 -1} + 2 \cdot 5^{\tau_0}) - 2k_{r-1} \qquad (\text{by \eqref{eq3.19}}) \\ 
			&\leq 5^{2n_r} - (5^{2\tau_0 -1} + 2 \cdot 5^{\tau_0}) - 2k_{r-1} \qquad (\text{by \eqref{eq3.23}}) \\
			&< -2 \cdot 5^{2n_r} - k_{r-1} - (5^{2\tau_0 -1} + 2 \cdot 5^{\tau_0}) \qquad (\text{because } k_{r-1}> 3\cdot 5^{2n_r}).
		\end{align*}
		Using Lemma \ref{3.0}, it follows that 
		\begin{align*}
			\sum_{i=Q+1}^{q+1} |\pdt{d}_k| &\geq \left| \sum_{i= Q+1}^{q+1} \varphi_a(\pdt{d}_i) \right| 
			> 2 \cdot 5^{2n_r} + k_{r-1} + (5^{2\tau_0 -1} + 2 \cdot 5^{\tau_0}) \\
			&\geq 2 \cdot 5^{2n_r} + k_{r-1} + |z| 
			\geq |z c^{k_{r-1}} a^{5^{2n_r}} b^{5^{2n_r}}|.   	
		\end{align*}
		However, this last inequality means that replacing $x_{Q} \pdt{d}_{Q+1} \cdots x_q \pdt{d}_{q+1}$ by 
		$z c^{k_{r-1}} a^{5^{2n_r}} b^{5^{2n_r}}$ written in letters from $S$ would reduce the number of symbols on the left-hand side of \eqref{eq3.6a} (whilst maintaining the equality to the right-hand side), contradicting \eqref{eq3.6a} being $X$-minimal. This proves our claim that at least one of the $c$'s from $c^{k_{r-1}}$ on the right-hand side of \eqref{eq3.6a} originates in one of the $\pdt{d}_i$'s on the left-hand side.
		
		As such there exists $Q \in \{1, \ldots, q \}$ and $j \in \{ 1, \ldots, |\pdt{d}_Q| \}$ such that $d_{Q,j}$ is a $c$ that remains uncancelled and forms part of $c^{k_{r-1}}$ on the right-hand side of \eqref{eq3.6a}. Let $p_Q$ denote the number of letters in the sequence $d_Q$ and write
		$$\pdt{d}_Q' = d_{Q,1}d_{Q,2} \cdots d_{Q, j-1} \qquad \text{and} \qquad \pdt{d}_Q'' = d_{Q,j}d_{Q,j+1} \cdots d_{Q, p_Q}.$$    
		We have 
		\begin{equation}	\label{eq3.24}
			\pdt{d}_Q'' x_Q  \cdots \pdt{d}_q x_q \pdt{d}_{q+1} = c^\nu a^{5^{2n_r}}b^{5^{2n_r}} 
		\end{equation}
		and
		\begin{equation} 	\label{eq3.25}
		\pdt{d}_1 x_1 \pdt{d}_2 \cdots \pdt{d}_{Q-1} x_{Q-1} \pdt{d}_Q' = a^{5^{2n_1}} b^{5^{2n_1}} c^{k_1} \cdots a^{5^{2n_{r-1}}}b^{5^{2n_{r-1}}} c^{k_{r-1} -\nu},
		\end{equation}
		for some $\nu \in \{ 1, \ldots, k_{r-1} \}$. 
		By Proposition \ref{3.5}(i) applied to \eqref{eq3.24}
		$$q+1-Q + |\pdt{d}_Q''| + \sum_{i=Q+1}^{q+1} |\pdt{d}_i| \geq \nu + 5^{2n_r - 1}$$
		and by the induction hypothesis applied to \eqref{eq3.25}
		$$Q-1 + \sum_{i=1}^{Q-1} |\pdt{d}_i| + |\pdt{d}_Q'| 
		\geq k_{r-1}-\nu + \sum _{i = 1}^{r-2} k_i +  \sum_{i=1}^{r-1} (5^{2n_i-1} +1).$$
		Adding these inequalities together we have
		$$q + \sum_{i=1}^{q+1} |\pdt{d}_i| \geq \sum _{i = 1}^{r-1} k_i + \sum_{i=1}^r (5^{2n_i-1} +1).$$
		We now return to the beginning of the proof 
		and recall that $l \ (=  |\pdt{e}|) \ = l-k_r + k_r = |\pdt{d}_{q+1}| + k_r.$ Therefore adding $k_r$ to both sides of the above inequality yields
		$$q +l + \sum_{i=1}^q |\pdt{d}_i| \geq \sum _{i = 1}^r k_i + \sum_{i=1}^r (5^{2n_i-1} +1),$$
		as required.
	\end{proof}

	\section{Proof of the Main Result}
	\noindent
	We define a weight $\omega$ on $\F_3$ by $\omega(t) = \exp(|t|_X) \ (t \in \F_3)$. This section is devoted to proving our main result of the paper, Theorem \ref{4.6}, which states that the two radicals of $\ell^1(\F_3, \omega)^{**}$ are different.	
	Given $u \in \F_3$ we set $\widetilde{\delta}_u = \frac{1}{\omega(u)} \delta_u$.
	
	\begin{lemma}		\label{4.1}
		Let $j \in \N$ and $u \in \F_3$. Then there exists $N = N(j,u) \in \N$ such that for all $k \geq N$
		$$\| \widetilde{\delta}_{a^{5^{2j}} b^{5^{2j}}} * \widetilde{\delta}_u * \widetilde{\delta}_{a^{5^{2k}} b^{5^{2k}}} \|_\omega 
		\leq \exp(-1 - 5^{2j-1}).$$
	\end{lemma}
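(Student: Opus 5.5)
The quantity in question is a single weighted point mass. By definition of $\widetilde{\delta}$,
$$\| \widetilde{\delta}_{a^{5^{2j}} b^{5^{2j}}} * \widetilde{\delta}_u * \widetilde{\delta}_{a^{5^{2k}} b^{5^{2k}}} \|_\omega = \frac{\omega(a^{5^{2j}}b^{5^{2j}}\, u\, a^{5^{2k}}b^{5^{2k}})}{\omega(a^{5^{2j}}b^{5^{2j}})\,\omega(u)\,\omega(a^{5^{2k}}b^{5^{2k}})}.$$
By \eqref{eqstar} (Proposition \ref{3.5}(i) with $k=0$; I assume $j\ge 2$ so that it applies), $\omega(a^{5^{2j}}b^{5^{2j}}) = \exp(1+5^{2j-1})$. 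It therefore suffices to show that, for all sufficiently large $k$,
$$|a^{5^{2j}}b^{5^{2j}}\, u\, a^{5^{2k}}b^{5^{2k}}|_X \leq |u|_X + |a^{5^{2k}}b^{5^{2k}}|_X = |u|_X + 5^{2k-1}+1 .$$

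The idea is that the generator $x(v,k)$ contains the conjugate $vb^{5^{2k-1}}v^{-1}$ for every $v$ with $|v|\le 5^k$. This conjugate can absorb the whole prefix $g := a^{5^{2j}}b^{5^{2j}}$ together with $u$, at no cost, once $k$ is large. Concretely, fix an $X$-minimal expression $u = y_1\cdots y_m$ with $m=|u|_X$. Take $v = g u$ and choose $N$ with $5^N \ge |gu|$ (note $N$ depends only on $j$ and $u$). For $k\ge N$ we have $|v|\le 5^k$, so $x(gu,k)\in X_k\subseteq X$. Then
$$x(gu,k) = g u\, b^{5^{2k-1}} u^{-1} g^{-1}\, a^{5^{2k}}b^{5^{2k}} .$$

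I now want to produce $g u a^{5^{2k}}b^{5^{2k}}$ from this. Compute
$$g u\, a^{5^{2k}} b^{5^{2k}} = (g u\, b^{-5^{2k-1}} u^{-1}g^{-1})\, x(gu,k)\cdot(\text{correction}).$$
This is not quite right as written, so I will redo the bookkeeping as follows. The expression $(b^{-1})^{5^{2k-1}}x(1,k)$ used in Lemma \ref{3.5a} generalises to
$$ g u\,(b^{-1})^{5^{2k-1}} u^{-1} g^{-1}\, x(gu,k) = a^{5^{2k}}b^{5^{2k}},$$
which does not obviously help either. The better move is to conjugate the $b^{-1}$ letters instead. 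We have
$$g u\, a^{5^{2k}}b^{5^{2k}} = g u\, (b^{-1})^{5^{2k-1}}\, x(1,k),$$
and
$$g u\,(b^{-1})^{5^{2k-1}} = \big(g u\, b^{-5^{2k-1}} u^{-1}g^{-1}\big)\, g u .$$
So I would instead write
$$g u\, a^{5^{2k}}b^{5^{2k}} = (b^{-1})^{5^{2k-1}}\cdot \big(b^{5^{2k-1}} g u\, b^{-5^{2k-1}}\big)\cdot x(1,k),$$
and look for a generator of the form $x(v,k)$ with $v = b^{5^{2k-1}}$-shifted words. This again seems costly.

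The cleanest route I would actually pursue is this. Use $x(v,k)$ with $v$ chosen so that $v b^{5^{2k-1}} v^{-1} = g u\, b^{5^{2k-1}}\, (gu)^{-1}$, then write
$$g u\, a^{5^{2k}}b^{5^{2k}} = (b^{-1})^{5^{2k-1}}\,[\,b^{5^{2k-1}} g u\, b^{-5^{2k-1}}\,]\, \cdots$$
and fall back on the following estimate. We have $|gua^{5^{2k}}b^{5^{2k}}|_X \le |g|_X$-free cost plus $m$ plus $5^{2k-1}+1$. Reaching this requires an identity of the form
$$g u a^{5^{2k}}b^{5^{2k}} = y_1\cdots y_m\,(b^{-1})^{5^{2k-1}}\, x(v,k)$$
with $v$ chosen so that the conjugate in $x(v,k)$ equals $u^{-1} g u\, b^{5^{2k-1}} u^{-1} g^{-1} u$-type corrections. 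Here I expect the main obstacle: getting the algebra so that exactly $m + 5^{2k-1}+1$ letters suffice. Even a bound of $m + 5^{2k-1} + 1 + C$ with $C$ a constant depending only on $j$ and $u$ would not suffice, since the target inequality has no slack. So I would carefully try $v = u^{-1}g^{-1}$-style choices, using $|v|\le 5^k$ for large $k$, until the identity closes.
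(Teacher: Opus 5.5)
You reduce the lemma correctly. The norm is the ratio of weights, and by \eqref{eqstar} it suffices to prove $|a^{5^{2j}}b^{5^{2j}}\,u\,a^{5^{2k}}b^{5^{2k}}|_X \le |u|_X + 5^{2k-1}+1$ for all large $k$. As you note, this inequality has no slack. But your proposal never establishes it. Each identity you write down either does not hold, or costs roughly an extra $|g|_X$, or is left unfinished. You end by saying you would keep trying choices of $v$ ``until the identity closes''. This word-length bound is the entire content of the lemma, so the proposal has a genuine gap at its only nontrivial step.

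The missing idea is that $g = a^{5^{2j}}b^{5^{2j}}$ should not go inside the conjugate at all. Take $v = u^{-1}$, and choose $N \geq j+1$ (so $N \geq 2$) with $5^N \geq |u|$. For $k \geq N$ we have $x(u^{-1},k) = u^{-1}b^{5^{2k-1}}u\,a^{5^{2k}}b^{5^{2k}} \in X$, and hence
$$g\,u\,a^{5^{2k}}b^{5^{2k}} = \bigl(a^{5^{2j}}b^{5^{2j}-5^{2k-1}}\bigr)\, u\, x(u^{-1},k).$$
Since $5^{2k-1} > 5^{2j}$, the first factor has word length $5^{2j} + (5^{2k-1}-5^{2j}) = 5^{2k-1}$. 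This gives $|g\,u\,a^{5^{2k}}b^{5^{2k}}|_X \leq 5^{2k-1} + |u|_X + 1$, which is exactly the bound you need.

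So the saving does not come from the conjugate absorbing $g$. Lemma \ref{3.5a} already requires a block $b^{-5^{2k-1}}$, and that block cancels the $b^{5^{2j}}$ of $g$. The $5^{2j}$ letters $a$ of $g$ then exactly replace the $5^{2j}$ cancelled letters $b^{-1}$ in the count. The conjugate $u^{-1}b^{5^{2k-1}}u$ serves only to move $u$ out from between $g$ and that block.

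Choices of $v$ that involve $g$ work against you. For example, $v = u^{-1}g^{-1}$ gives $g\,u\,a^{5^{2k}}b^{5^{2k}} = b^{-5^{2k-1}}\,g u\,x(u^{-1}g^{-1},k)$. This costs $|gu|_X$ instead of $|u|_X$, so in general it overshoots the bound.
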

	
	\begin{proof}
		Choose $N \in \N$ such that $5^N \geq |u|$ and $N > j+1$.
		For all $k \geq N$, we have
		\begin{align*}
			a^{5^{2j}}b^{5^{2j}} u a^{5^{2k}} b^{5^{2k}} &= 
			(a^{5^{2j}} b^{5^{2j} - 5^{2k-1}} u) \cdot
			(u^{-1} b^{5^{2k-1}} u a^{5^{2k}}b^{5^{2k}}) = \\
			&= (a^{5^{2j}} b^{5^{2j} - 5^{2k-1}} u) \cdot x(u^{-1}, k).
		\end{align*}
		By the choice of $N$ we have $x(u^{-1}, k) \in X$, and $5^{2j} - 5^{2k-1} <0$. As such we have
		\begin{align*}
			|a^{5^{2j}}b^{5^{2j}} u a^{5^{2k}} b^{5^{2k}} |_X 
			&\leq |a^{5^{2j}}b^{5^{2j} - 5^{2k-1}}|_X + |u|_X +|x(u^{-1}, k)|_X \\
			&\leq 5^{2j} + 5^{2k-1} - 5^{2j} + |u|_X + 1 = 5^{2k-1} + |u|_X +1.
		\end{align*}
		It follows, using \eqref{eqstar} in the third line, that 
		\begin{align*}
			\| \widetilde{\delta}_{a^{5^{2j}} b^{5^{2j}}} * \widetilde{\delta}_u * &\widetilde{\delta}_{a^{5^{2k}} b^{5^{2k}}} \|_\omega  = \frac{\omega(a^{5^{2j}}b^{5^{2j}} u a^{5^{2k}} b^{5^{2k}})}{\omega(a^{5^{2j}}b^{5^{2j}}) \omega(u) \omega(a^{5^{2k}} b^{5^{2k}})} \\
			&= \exp \left(| a^{5^{2j}}b^{5^{2j}} u a^{5^{2k}} b^{5^{2k}}|_X - 
			| a^{5^{2j}}b^{5^{2j}}|_X - |u|_X - |a^{5^{2k}} b^{5^{2k}}|_X \right) \\
			&\leq \exp \left( 5^{2k-1} + |u|_X + 1 - (5^{2j-1} + 1) 
			-|u|_X - ( 5^{2k-1}+1) \right) \\
			&= \exp(-1 - 5^{2j-1}),
		\end{align*}
		as claimed.
	\end{proof}
	
	
	
	\begin{lemma}		\label{4.3}
		Let $f \in \ell^1(\F_3, \omega)$ be finitely supported such that $\|f\|_\omega \leq 1$. Then there exist $N(j,f) \in \N$ such that 
		$$\| \widetilde{\delta}_{a^{5^{2j}} b^{5^{2j}}} * f * \widetilde{\delta}_{a^{5^{2k}} b^{5^{2k}}} \|_\omega < \exp(-1 -5^{2j-1}) \qquad (k \geq N(j,f)).$$
	\end{lemma}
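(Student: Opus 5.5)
We reduce to Lemma \ref{4.1} using linearity of convolution and the triangle inequality. Since $f$ is finitely supported, we write
$$f = \sum_{u \in F} f(u)\delta_u = \sum_{u \in F} \lambda_u \widetilde{\delta}_u, \qquad \text{where } \lambda_u = f(u)\omega(u),$$
and $F = \supp f$ is a finite set. Then
$$\sum_{u\in F}|\lambda_u| = \sum_{u \in F}|f(u)|\omega(u) = \|f\|_\omega \leq 1.$$
Given $j$, we set
$$N(j,f) = \max_{u \in F} N(j,u) + 1,$$
where the $N(j,u)$ are supplied by Lemma \ref{4.1}. This is a finite maximum, which is the only place finite support is used.

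For $k \geq N(j,f)$, bilinearity of convolution and the triangle inequality give
$$\| \widetilde{\delta}_{a^{5^{2j}} b^{5^{2j}}} * f * \widetilde{\delta}_{a^{5^{2k}} b^{5^{2k}}} \|_\omega \leq \sum_{u \in F} |\lambda_u| \, \| \widetilde{\delta}_{a^{5^{2j}} b^{5^{2j}}} * \widetilde{\delta}_u * \widetilde{\delta}_{a^{5^{2k}} b^{5^{2k}}} \|_\omega \leq \|f\|_\omega \exp(-1-5^{2j-1}).$$
This yields the bound with $\leq$ rather than the stated strict inequality.

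\textbf{Main obstacle: strictness.} When $\|f\|_\omega < 1$ the inequality is already strict, provided $f \neq 0$; if $f = 0$ the left-hand side is $0$. The delicate case is $\|f\|_\omega = 1$, where equality could in principle occur. Two routes are available.

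\emph{Strict form of Lemma \ref{4.1}.} Revisit its proof. For $k$ large, the product $a^{5^{2j}}b^{5^{2j}}\,u\,a^{5^{2k}}b^{5^{2k}}$ can be written using $|a^{5^{2j}}b^{5^{2j}-5^{2k-1}}u|_X + 1$ generators. One can bound $|a^{5^{2j}}b^{5^{2j}-5^{2k-1}}|_X$ strictly better than by its $S$-length, for instance by using a generator from $X_j$ to absorb $a^{5^{2j}}$. This costs about $5^{2j-1}+1$ symbols instead of $5^{2j}$, which improves the exponent by a fixed positive amount.

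\emph{Convexity.} Observe that equality in the triangle inequality would require every term to attain the bound simultaneously. The strict improvement from the first route, applied to any single $u$ with $\lambda_u \neq 0$, then gives strict inequality overall.

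I expect the write-up to use the first route. In practice, Lemma \ref{4.1}'s estimate, and hence this one, only needs to hold with the given bound for the later nilpotency argument in Section 6, so the essential content is the finite-support reduction above.
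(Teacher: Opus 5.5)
Your finite-support reduction is exactly the paper's proof. You write $f=\sum_{u\in F}f(u)\omega(u)\widetilde{\delta}_u$, take $N(j,f)=\max_{u\in F}N(j,u)$, and apply the triangle inequality with Lemma~\ref{4.1}. This proves the bound with $\leq$, and with $<$ when $\|f\|_\omega<1$. The paper's own proof also only justifies $\leq$: it writes $<$ at the step where Lemma~\ref{4.1} is invoked, but that lemma gives only $\leq$. Proposition~\ref{4.4} uses only the non-strict bound.

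The gap is your treatment of strictness, and it cannot be closed, because the strict inequality is false.

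\emph{Your first route saves nothing.} Since $5^{2k-1}>5^{2j}$, the $S$-length of $a^{5^{2j}}b^{5^{2j}-5^{2k-1}}$ is already $5^{2k-1}$. Absorbing $a^{5^{2j}}$ with $x(1,j)$ gives $b^{-5^{2j-1}}x(1,j)b^{-5^{2k-1}}$, which costs $5^{2k-1}+5^{2j-1}+1$, more rather than less. No saving is possible at all. Let $\eta=\varphi_a-\varphi_b$. Then $\eta(s)\leq 1$ for $s\in S$, $\eta=-5^{2i-1}$ on $X_i$, and $\eta(a^{5^{2j}}b^{5^{2j}-5^{2k-1}})=5^{2k-1}$. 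Hence this element has $X$-length exactly $5^{2k-1}$.

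\emph{Counterexample to strictness.} Take $j\geq 2$ and $f=\delta_1$, so that $\|f\|_\omega=\omega(1)=1$. For $k>j$ let $g=a^{5^{2j}}b^{5^{2j}}a^{5^{2k}}b^{5^{2k}}$. We claim $|g|_X=5^{2k-1}+1$. The upper bound comes from the proof of Lemma~\ref{4.1} with $u=1$. For the lower bound, note that $\eta(g)=0$, and consider three cases.
\begin{enumerate}
\item An expression using a generator from some $X_i$ with $i\geq k$ needs at least $5^{2i-1}\geq 5^{2k-1}$ letters $a$ or $b^{-1}$ in addition to that generator.
\item An expression using no generator from $X_\infty$ has length at least $|g|$.
\item Suppose every generator has level $j'<k$. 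Apply Lemma~\ref{3.4} to a super $X$-minimal form: some $a$ from the last generator survives. At most $5^{2j'}$ of the letters of $g$ after that $a$ come from the generator. So at least $5^{2k}-5^{2j'}>5^{2k-1}+1$ of them are trailing letters from $S$.
\end{enumerate}
Combined with \eqref{eqstar}, this gives, for every $k>j$,
\[
\|\widetilde{\delta}_{a^{5^{2j}}b^{5^{2j}}}*\delta_1*\widetilde{\delta}_{a^{5^{2k}}b^{5^{2k}}}\|_\omega=\exp\bigl((5^{2k-1}+1)-(5^{2j-1}+1)-(5^{2k-1}+1)\bigr)=\exp(-1-5^{2j-1}).
\]
So no $N(j,f)$ gives strict inequality. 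Your ``convexity'' route depends on the first route and fails with it.

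The correct statement is the non-strict estimate. Your first paragraph proves it, and it is all that Section~6 needs.
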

	
	\begin{proof}
		Let  $N(j,u)$ be as in Lemma \ref{4.1}. Write $F = \supp f$, and let $N(j,f) = \max \{ N(j,u) : u \in F \}.$ Note that
		$$f = \sum_{u \in F} f(u) \delta_u = \sum_{u \in F} f(u) \omega(u) \widetilde{\delta}_u ,$$
		and that $\sum_{u \in F} |f(u)|\omega(u) \leq 1.$ Given $j \geq R$ and $k \geq N(j,f)$ we have
		\begin{align*}
			\| \widetilde{\delta}_{a^{5^{2j}} b^{5^{2j}}} * f * \widetilde{\delta}_{a^{5^{2k}} b^{5^{2k}}} \|_\omega 
			&= \left\| \sum_{u \in F} f(u) \omega(u) \widetilde{\delta}_{a^{5^{2j}}b^{5^{2j}}} * \widetilde{\delta}_u * \widetilde{\delta}_{a^{5^{2k}}b^{5^{2k}}} \right\|_\omega \\
			&\leq \sum_{u \in F} |f(u)|\omega(u) 
			\| \widetilde{\delta}_{a^{5^{2j}} b^{5^{2j}}} * \widetilde{\delta}_u * \widetilde{\delta}_{a^{5^{2k}} b^{5^{2k}}} \|_\omega \\
			&< \sum_{u \in F} |f(u)|\omega(u) \exp(-1 -5^{2j-1})  \leq \exp(-1 -5^{2j-1}) ,
		\end{align*}
		where we have used Lemma \ref{4.1} in the last line.
	\end{proof}
	
	Let $g_k = \widetilde{\delta}_{a^{5^{2k}}b^{5^{2k}}} \ (k \in \N).$ By the Banach--Alaoglu Theorem there exists a subnet
	$(g_{h(\beta)})_{\beta \in M}$ which weak*-converges to a limit in the unit ball of $\ell^1(\F_3, \omega)^{**}$. We denote this limit by $\Phi_0$. 
	Note that $\Phi_0 \neq 0$ because $$\langle \Phi_0, \omega \rangle = \lim_{\beta} \langle g_{h(\beta)}, \omega \rangle  = 1.$$
	We shall show that $\Phi_0 \in \rad( \ell^1(\F_3, \omega)^{**}, \Box)$.

	\begin{proposition}		\label{4.4}
		Let $\Psi \in \ell^1(\F_3, \omega)^{**}$. Then $\Phi_0 \Box \Psi \Box \Phi_0 = 0$.
	\end{proposition}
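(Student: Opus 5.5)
We will show that $\langle \Phi_0 \Box \Psi \Box \Phi_0, \lambda\rangle = 0$ for every $\lambda \in \ell^\infty(\F_3, 1/\omega)$. The plan is to expand both $\Box$ products as iterated weak*-limits using \eqref{eqArens}, to reduce to finitely supported elements of the unit ball, and then to apply the uniform decay from Lemma \ref{4.3}.

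\emph{Reduction.} By scaling we may assume $\|\Psi\| \le 1$. By Goldstine's theorem there is a net $(f_\alpha)$ of finitely supported functions with $\|f_\alpha\|_\omega \le 1$ that converges weak* to $\Psi$; finitely supported functions are dense in the unit ball of $\ell^1(\F_3,\omega)$. Since $\Box$ is weak*-continuous in the right variable, and since for fixed $a\in\ell^1$ the map $\Phi\mapsto a\Box\Phi$ is weak*-continuous, \eqref{eqArens} gives
$$\Phi_0 \Box \Psi \Box \Phi_0 = \lim_{\beta} \lim_{\alpha} \lim_{\beta'} g_{h(\beta)} * f_\alpha * g_{h(\beta')},$$
with all limits taken in the weak* topology. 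Precisely, $\Phi_0\Box\Psi\Box\Phi_0 = \lim_\beta g_{h(\beta)}\Box(\Psi\Box\Phi_0)$. Next, $\Psi\Box\Phi_0 = \lim_\alpha f_\alpha\Box\Phi_0$: this holds because $\Phi\mapsto \Phi\Box\Phi_0$ is weak*-continuous on the first factor when it is restricted to nets from $\ell^1$, since $\Phi\Box\Phi_0$ is defined through $\Phi_0\cdot\lambda \in \ell^\infty$. Finally, $f_\alpha\Box\Phi_0 = \lim_{\beta'} f_\alpha * g_{h(\beta')}$. Left multiplication by $g_{h(\beta)}$, viewed as an element of $\ell^1$, is weak*-continuous, so it passes through the two inner limits. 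The resulting expression is
$$\langle \Phi_0\Box\Psi\Box\Phi_0,\lambda\rangle = \lim_\beta\lim_\alpha\lim_{\beta'} \langle g_{h(\beta)} * f_\alpha * g_{h(\beta')},\lambda\rangle.$$

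\emph{Estimate.} Fix $\beta$, and put $j = h(\beta)$. Fix $\alpha$. Since $h$ is cofinal and order-preserving, $h(\beta')\ge N(j,f_\alpha)$ for all sufficiently large $\beta'$. Lemma \ref{4.3} then gives $\|g_j * f_\alpha * g_{h(\beta')}\|_\omega < \exp(-1-5^{2j-1})$. Hence the innermost limit has modulus at most $\|\lambda\|\exp(-1-5^{2j-1})$. The crucial point is that this bound is independent of $\alpha$, so the middle limit obeys the same bound. As $\beta$ increases, $j = h(\beta)\to\infty$ along the directed set, again by cofinality and monotonicity, so the bound tends to $0$. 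Therefore the outer limit is $0$. Since $\lambda$ was arbitrary, $\Phi_0\Box\Psi\Box\Phi_0 = 0$.

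\emph{Main obstacle.} The step requiring care is the justification of the triple iterated limit, because $\Box$ is not weak*-continuous in the left variable in general. The argument needs left continuity only for $\ell^1$ elements and for $\Phi\mapsto\Phi\Box\Phi_0$ along nets in $\ell^1$, and both follow directly from the defining formulae in Section 2. All the quantitative content is supplied by Lemma \ref{4.3}, whose bound is uniform in $f$ over the unit ball.
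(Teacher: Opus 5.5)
Your proof is correct and follows the paper's argument essentially verbatim. You use the same reduction to a weak*-convergent net of finitely supported $f_\alpha$ in the unit ball, the same triple iterated limit $\lim_\beta\lim_\alpha\lim_{\beta'} g_{h(\beta)}*f_\alpha*g_{h(\beta')}$, the same $\alpha$-independent bound $\exp(-1-5^{2h(\beta)-1})\|\lambda\|$ from Lemma~\ref{4.3}, and the same final step of letting $h(\beta)\to\infty$. (A minor wording point: with the paper's definition $\langle \Phi\Box Y,\lambda\rangle=\langle\Phi,Y\cdot\lambda\rangle$, the map $\Phi\mapsto\Phi\Box Y$ is weak*-continuous for every $Y$, not only along nets from $\ell^1$; your outer step uses this with $Y=\Psi\Box\Phi_0$. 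The map that can fail to be continuous is $Y\mapsto\Phi\Box Y$ for $\Phi\notin\ell^1$, so your ``main obstacle'' paragraph blurs the two sides, though every step you actually use is valid.)
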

	
	\begin{proof}
		Without loss of generality we may take $\| \Psi \| \leq 1$. Let $(f_\alpha)_{\alpha \in \Lambda}$ be a net of finitely-supported functions in the unit ball of $\ell^1(\F_3, \omega)$ which converges to $\Psi$ in the weak*-topology on
		$ \ell^1(\F_3, \omega)^{**}$.
		We have
		$$\Phi_0 \Box \Psi \Box \Phi_0 = \lim_{w^*, \gamma} \, \lim_{w^*, \alpha} \, \lim_{w^*, \beta} \, g_{h(\gamma)}*f_\alpha*g_{h(\beta)}.$$
		Let $\eps >0$. Let $\lambda \in \ell^1(\F_3, \omega)^*$, and fix $\alpha$ and $\gamma$. Using the notation of Lemma \ref{4.3}, there exists $\beta_0 \in M$ such that for every $\beta \geq  \beta_0$ we have $h(\beta) \geq N(h(\gamma), f_\alpha).$ As such, Lemma \ref{4.3} implies that
		\begin{equation*}
			\left| \langle g_{h(\gamma)}*f_\alpha *g_{h(\beta)}, \lambda \rangle \right| \leq \| g_{h(\gamma)}*f_\alpha *g_{h(\beta)} \|_\omega \|\lambda \| 
			\leq \exp(-1- 5^{2h(\gamma) - 1}) \| \lambda \|,
		\end{equation*}
		and hence (taking the limits $\beta \to \infty$ and then $\alpha \to \infty$) that
		$$\left| \langle g_{h(\gamma)} \Box \Psi \Box \Phi_0, \lambda \rangle \right| \leq  \exp(-1- 5^{2h(\gamma) - 1}) \| \lambda \|.$$
		Let $\gamma_0 \in M$ satisfy $\exp(-1-5^{2h(\gamma) - 1}) < \eps$ for all $\gamma > \gamma_0$. Then
		$$\left| \langle g_{h(\gamma)} \Box \Psi \Box \Phi_0, \lambda \rangle \right| < \eps \| \lambda \| \quad (\gamma \geq \gamma_0),$$
		which implies that
		$$\left| \langle \Phi_0 \Box \Psi \Box \Phi_0, \lambda \rangle \right| \leq \eps \| \lambda \|,$$
		and hence (as $\lambda$ was arbitrary) that $\| \Phi_0 \Box \Psi \Box \Phi_0 \| \leq \eps$. As $\eps >0$ was arbitrary we have $\Phi_0 \Box \Psi \Box \Phi_0 = 0$, as required.
	\end{proof}
	
	It follows that $(\Psi \Box \Phi_0)^{\Box 2} = 0$ for every  $\Psi \in \ell^1(\F_3, \omega)^{**}$. Hence $\Phi_0$ generates a nilpotent left ideal in $\ell^1(\F_3, \omega)^{**}$, which implies that $\Phi_0 \in \rad( \ell^1(\F_3, \omega)^{**}, \Box)$.
	
	Let $\psi_j= \widetilde{\delta}_{c^j} = \delta_{c^j} \ (j \in \N)$. Let $(\psi_{m(\alpha)})_{\alpha \in \Lambda}$ be a weak*-convergent subnet of $(\psi_j)$, which exists by the Banach--Alaoglu Theorem. Define $\Psi_0 = \lim_{w^*,\alpha} \psi_{m(\alpha)}$. 
	
	\begin{proposition}		\label{4.5}
		The functional $\Phi_0 \Diamond \Psi_0$ is not quasi-nilpotent in $(\ell^1(\F_3, \omega)^{**}, \Diamond)$.
	\end{proposition}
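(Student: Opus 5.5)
We will show that $\|(\Phi_0\Diamond\Psi_0)^{\Diamond r}\|\geq 1$ for every $r\in\N$, so that the spectral radius of $\Phi_0\Diamond\Psi_0$ is at least $1$. To detect this norm we test against the functional $\omega\in\ell^\infty(\F_3,1/\omega)$, which has norm $1$, and aim to prove $\langle(\Phi_0\Diamond\Psi_0)^{\Diamond r},\omega\rangle = 1$.

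The first step is to expand the power as an iterated weak*-limit. Since $\Diamond$ is weak*-continuous in the left variable, \eqref{eqArens} and induction give $(\Phi_0\Diamond\Psi_0)^{\Diamond r}$ as an iterated limit of products
$$g_{h(\beta_1)}*\psi_{m(\alpha_1)}*g_{h(\beta_2)}*\psi_{m(\alpha_2)}*\cdots*g_{h(\beta_r)}*\psi_{m(\alpha_r)},$$
with the limits taken in the $\Diamond$ order. The innermost limit is over the leftmost index $\beta_1$, then $\alpha_1$, and so on, with the outermost limit over $\alpha_r$. Thus, when we pass to the limit in a given factor, all factors to its right are already fixed. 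In particular, when we take the limit over $\alpha_{i-1}$ (the factor $c^{k_{i-1}}$), the exponent $n_i=h(\beta_i)$ to its right is fixed. Because the subnet $m$ is cofinal, we may take $k_{i-1}=m(\alpha_{i-1})>3\cdot5^{2n_i}$, exactly as Proposition \ref{3.8} requires.

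Next we evaluate on the product. Each such product is the normalised point mass $\prod_i\bigl(\omega(a^{5^{2n_i}}b^{5^{2n_i}})\omega(c^{k_i})\bigr)^{-1}\delta_u$, where $u=a^{5^{2n_1}}b^{5^{2n_1}}c^{k_1}\cdots a^{5^{2n_r}}b^{5^{2n_r}}c^{k_r}$. Pairing with $\omega$ gives $\exp\bigl(|u|_X-\sum_i|a^{5^{2n_i}}b^{5^{2n_i}}|_X-\sum_i|c^{k_i}|_X\bigr)$. Whenever $k_{i-1}>3\cdot5^{2n_i}$ for all $i$, this quantity equals $1$ by \eqref{eqdoubledagger}, which combines Proposition \ref{3.8}, \eqref{eqstar} and Lemma \ref{3.1c}. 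We also need each $n_i\ge2$, which holds eventually along the subnet.

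It remains to pass to the limits. At each stage of the iterated limit, the quantity $\langle\cdot,\omega\rangle$ is eventually constant and equal to $1$ along the relevant subnet. To make this precise, we argue from the outside in, via the inductive formula $\langle \Phi\Diamond\Psi,\lambda\rangle=\lim_{\text{right}}\lim_{\text{left}}$. At the stage where the limit over $\alpha_{i-1}$ is taken, the relevant $n_i$ is fixed, and cofinality of $m$ provides a tail on which $k_{i-1}$ is large. Limits of eventually-constant nets are that constant, so $\langle(\Phi_0\Diamond\Psi_0)^{\Diamond r},\omega\rangle=1$. Hence $\|(\Phi_0\Diamond\Psi_0)^{\Diamond r}\|^{1/r}\ge1$, the spectral radius is at least $1$, and $\Phi_0\Diamond\Psi_0$ is not quasi-nilpotent.

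The main obstacle is the bookkeeping of the order of the iterated limits. We must check that the $\Diamond$ order really fixes each $n_i$ before the corresponding $k_{i-1}$ is sent to infinity; this is the opposite of the $\Box$ order used in Proposition \ref{4.4}, and that difference is the whole point of the example. We must also justify interchanging the pairing with $\omega$ with the weak*-limits. The latter is immediate, since $\omega$ is a fixed element of the predual-side space $\ell^1(\F_3,\omega)^*$ and pairing against it is weak*-continuous.
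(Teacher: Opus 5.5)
Your proposal is correct and is essentially the paper's proof: expand $(\Phi_0\Diamond\Psi_0)^{\Diamond r}$ as the iterated weak*-limit with $\beta_1$ innermost and $\alpha_r$ outermost, note that each $n_i=h(\beta_i)$ is already fixed when the limit in $\alpha_{i-1}$ is taken (so eventually $m(\alpha_{i-1})>3\cdot 5^{2n_i}$), apply \eqref{eqdoubledagger} to get $\langle\cdot,\omega\rangle=1$, and conclude by the spectral radius formula. Your bookkeeping is slightly more careful than the paper's: you record $n_i\ge 2$ and use the threshold $3\cdot5^{2n_i}$ that Proposition \ref{3.8} actually requires, where the paper's proof writes $3\cdot 5^{h(\beta_i)}$. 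One small wording point: the continuity you really use is that $\Psi\mapsto\Theta\Diamond\Psi$ is weak*-continuous for every $\Theta$, and $\Theta\mapsto\Theta\Diamond b$ is weak*-continuous for $b\in\ell^1(\F_3,\omega)$, which is continuity in the right variable rather than the left.
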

	
	\begin{proof}
		Note that  $\omega \in \ell^\infty(\F_3, 1/\omega) = \ell^1(\F_3, \omega)^*$, and $\| \omega \| = 1$. Let $k \in \N$.  Then
		$$(\Phi_0 \Diamond \Psi_0)^{\Diamond k} = \lim_{\alpha_k} \lim_{\beta_k} \cdots \lim_{\alpha_1} \lim_{\beta_1} g_{h(\beta_1)}*\psi_{m(\alpha_1)}* \cdots 
		* g_{h(\beta_k)}*\psi_{m(\alpha_k)},$$
		where the limits are taken in the weak*-topology on $\ell^1(\F_3, \omega)^{**}$. 
		By \eqref{eqdoubledagger}, provided that $m(\alpha_{i-1})>3 \cdot 5^{h(\beta_i)} \ (i = 2, 3, \ldots, k)$, we have 
		\begin{multline*}
			\left| a^{5^{2h(\beta_1)}}b^{5^{2h(\beta_1)}} c^{m(\alpha_1)} \cdots 
			a^{5^{2h(\beta_k)}}b^{5^{2h(\beta_k)}} c^{m(\alpha_k)} \right|_X  
			- \left|  a^{5^{2h(\beta_1)}}b^{5^{2h(\beta_1)}} \right|_X - 
			\left| c^{m(\alpha_1)} \right|_X - 
			\cdots \\
			\cdots
			- \left| a^{5^{2h(\beta_k)}}b^{5^{2h(\beta_k)}} \right|_X 
			-\left| c^{m(\alpha_k)}  \right|_X = 0,
		\end{multline*}
		and hence
		\begin{align*}
			\langle  g_{h(\beta_1)}*&\psi_{m(\alpha_1)}* \cdots 
			* g_{h(\beta_k)}*\psi_{m(\alpha_k)}, \omega \rangle  \\
			&= \frac{\omega \left[ a^{5^{2h(\beta_1)}}b^{5^{2h(\beta_1)}} c^{m(\alpha_1)} \cdots 
				a^{5^{2h(\beta_k)}}b^{5^{2h(\beta_k)}} c^{m(\alpha_k)} \right]}
			{\omega(a^{5^{2h(\beta_1)}}b^{5^{2h(\beta_1)}}) \omega(c^{m(\alpha_1)}) \cdots 
				\omega(a^{5^{2h(\beta_k)}}b^{5^{2h(\beta_k)}}) \omega(c^{m(\alpha_k)})} \\
			&=\exp(0) = 1.
		\end{align*}
		As such
		\begin{equation*}
			\langle (\Phi_0 \Diamond \Psi_0)^{\Diamond k} , \omega \rangle = 
			\lim_{\alpha_k} \lim_{\beta_k} \cdots \lim_{\alpha_1} \lim_{\beta_1} \langle  g_{h(\beta_1)}*\psi_{m(\alpha_1)}* \cdots 
			* g_{h(\beta_k)}*\psi_{m(\alpha_k)}, \omega \rangle = 1. 
		\end{equation*}
		Since $\| \omega \| = 1$ as a functional, this implies that $\|(\Phi_0 \Diamond \Psi_0)^{\Diamond k} \| \geq 1 \ (k \in \N)$, and hence also that  $\|(\Phi_0 \Diamond \Psi_0)^{\Diamond k} \|^{1/k} \geq 1 \ (k \in \N)$. Therefore (by the spectral radius formula) $\Phi_0 \Diamond \Psi_0$ is not quasi-nilpotent.
	\end{proof}
	
	Putting the last two propositions together gives our main theorem.
	
	\begin{theorem}		\label{4.6}
		We have $\Phi_0 \in \rad(\ell^1(\F_3, \omega)^{**}, \Box)$ but $\Phi_0 \notin \rad(\ell^1(\F_3, \omega)^{**}, \Diamond).$
	\end{theorem}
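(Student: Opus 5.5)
The theorem combines Proposition \ref{4.4} and Proposition \ref{4.5} with the two characterisations of the Jacobson radical from Section 2.

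For the first assertion, fix $\Psi \in \ell^1(\F_3,\omega)^{**}$. Proposition \ref{4.4} gives $\Phi_0 \Box \Psi \Box \Phi_0 = 0$. By associativity of $\Box$,
$$(\Psi \Box \Phi_0)^{\Box 2} = \Psi \Box (\Phi_0 \Box \Psi \Box \Phi_0) = 0,$$
so each element of the left ideal $\ell^1(\F_3,\omega)^{**}\Box\Phi_0$ is nilpotent, hence quasi-nilpotent. The radical is the set of $a$ with $\sigma(ba)=\{0\}$ for all $b$, and we have just checked this for every $b=\Psi$. Therefore $\Phi_0 \in \rad(\ell^1(\F_3,\omega)^{**},\Box)$. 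If the algebra is non-unital, one passes to the unitisation, where quasi-nilpotence is unchanged. Strictly, this also needs the product ideal $I^{\Box 2}=\{0\}$ for $I=\ell^1(\F_3,\omega)^{**}\Box\Phi_0$, which follows from $\Psi_1\Box\Phi_0\Box\Psi_2\Box\Phi_0 = \Psi_1\Box(\Phi_0\Box\Psi_2\Box\Phi_0)=0$.

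For the second assertion, use the right-ideal description of the radical: $\rad \A = \{a : \sigma(ab)=\{0\}\ (b\in\A)\}$, applied to $\A = (\ell^1(\F_3,\omega)^{**},\Diamond)$. If $\Phi_0$ were in $\rad(\ell^1(\F_3,\omega)^{**},\Diamond)$, then taking $b=\Psi_0$ would make $\Phi_0\Diamond\Psi_0$ quasi-nilpotent. This contradicts Proposition \ref{4.5}, so $\Phi_0\notin\rad(\ell^1(\F_3,\omega)^{**},\Diamond)$.

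No step is a genuine obstacle, since all the work sits in Propositions \ref{4.4} and \ref{4.5}. The only points needing care are associativity of each Arens product, which is standard, and the equivalence of the left- and right-ideal characterisations of the radical. The latter is what allows the nilpotent left ideal to be used for $\Box$ and the product $\Phi_0\Diamond\Psi_0$, with $\Phi_0$ on the left, to be used for $\Diamond$.
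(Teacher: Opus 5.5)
Your proposal is correct and follows the same route as the paper. Proposition \ref{4.4} makes every $\Psi\Box\Phi_0$ square to zero, which puts $\Phi_0$ in the $\Box$-radical, and Proposition \ref{4.5} applied with $\Psi_0$ keeps $\Phi_0$ out of the $\Diamond$-radical. Two minor points: the algebra is already unital, since $\delta_1$ is an identity for both Arens products, and $I^{\Box 2}=\{0\}$ is not actually needed once you use the elementwise condition $\sigma(\Psi\Box\Phi_0)=\{0\}$.
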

	
	\begin{proof}
		By Proposition \ref{4.4} $\Phi_0$ generates a nilpotent left ideal with respect to $\Box$, and hence $\Phi_0 \in \rad(\ell^1(\F_3, \omega)^{**}, \Box)$. However, by Proposition \ref{4.5}, the right ideal generated by $\Phi_0$ with respect to $\Diamond$ is not contained inside the set of quasi-nilpotent elements, and hence $\Phi_0 \notin \rad(\ell^1(\F_3, \omega)^{**}, \Diamond)$.
	\end{proof}
	
	\vskip 2mm
	\begin{remark} 	\label{DDRemark}
	The radical element $\Phi_0$ of $(\ell^1(\F_3, \omega)^{**}, \Box)$ that we have defined is a limit of normalised point-masses. Such elements were studied by Dales and Dedania in \cite{DD}, and in their notation (see Section 2.4 of this article) we have found an element of $(\beta \F_3)_\omega$ which belongs to one radical but not to the other. 
	However, note that in the unweighted case this would not be possible as such elements can never be radical: that is to say, if $G$ is a group and $(\delta_{t_\alpha})$ is a net of point masses in $\ell^1(G)$, weak*-converging to some element $\Gamma \in \ell^1(G)^{**}$, then $\Gamma \notin \rad(\ell^1(G)^{**}, \Box) \cup \rad(\ell^1(G)^{**}, \Diamond)$.
	The proof of this fact is along very similar lines to the proof of Proposition \ref{4.5} above: one shows that $\langle \Gamma^{\Box k}, 1 \rangle = \langle \Gamma^{\Diamond k}, 1 \rangle  = 1$ for all $k\in \N$, and then uses the spectral radius formula to conclude that $\Gamma$ is not quasi-nilpotent for either product.
\end{remark}

	\subsection*{Acknowledgments}
	This work was begun after the author had the opportunity to speak on the topic of Arens products at an NBFAS meeting in honour of the memory of H. Garth Dales (1944--2022), which took place in April 2024. The author is grateful to the organisers of that meeting for the opportunity to speak and for their hospitality.

\end{document}